\theoremstyle{definition}
\newtheorem{definition}{Definition}[section]
\newtheorem{remark}[definition]{Remark}
\theoremstyle{plain}
\newtheorem{lemma}[definition]{Lemma}
\newtheorem{prp}[definition]{Proposition}
\newtheorem{corollary}[definition]{Corollary}
\newtheorem{theorem}[definition]{Theorem}
\begin{document}

\title{Polynomial identities of bicommutative algebras, Lie and Jordan elements}
\author{A.S. Dzhumadil'daev, N.A. Ismailov}

\address{Kazakh-British Technical University, Tole bi 59,
Almaty,050000,Kazakhstan}

\address{S.Demirel University,  Toraygirova, 19, Almaty,  050043, Kazakhstan}

\email{dzhuma@hotmail.com, nurlan.ismai@gmail.com}
\subjclass{17C05, 16W25}

\keywords{Polynomial identities, bicommutative algebras,  Lie elements, Jordan elements}

 \maketitle

\begin{abstract} An algebra with identities $a(bc)=b(ac),$ $(ab)c=(ac)b$ is
called bicommutative. We construct list of identities satisfied by  commutator and anti-commutator products in a free bicommutative algebra. We give criterions for elements of a free bicommutative algebra to be Lie or Jordan.
\end{abstract}

\section{\label{nn}\ Introduction}
One of the common questions in algebra for an adjoint class of a variety via commutator or anti-commutator products is to determine a set of their identities. 
For some varieties of algebras this question is solved or partially solved and for some of them are still open.

Classical example of such approach gives us a variety of associative algebras. Commutator and anti-commutator algebras of an associative algebra satisfy Jacobi and Jordan identities respectively. By Poincar\'e-Birkhoff-Witt (PBW) theorem anti-commutativity and the Jacobi identities gives us complete list of identities for commutator algebras of associative algebras  \cite{Bergman},\cite{Bokut1},\cite{Bokut2}. In other words, any identity satisfied by the commutator product in every associative algebra is a consequence of the anti-commutativity and the Jacobi identities. The case of anti-commutator algebras of associative algebras is completely different. There is no  analogue of PBW theorem. There is no embedding theorem for any Jordan algebra. There exists an  identity of degree eight that is not consequence of commutativity and the Jordan identities \cite{Glennie1},\cite{Glennie2}. For a survey on identities of Jordan algebras see \cite{McCrimmon}. 

Another example concerns  a variety of pre-Lie (right-symmetric) algebras. The commutator algebra of  pre-Lie algebra satisfies anti-commutativity and the Jacobi identities. Moreover, there holds embedding theorem for all Lie algebras \cite{Bokut3},\cite{Dzhum-Lof},\cite{Manchon},\cite{Markl}. In other words, any identity satisfied by the commutator product in every pre-Lie algebra is a consequence of the anti-commutativity and the Jacobi identities. As far as anti-commutator case,  no identity except commutativity holds for any anti-commutator algebra of all pre-Lie algebras \cite{Ber-Lod}.

The first author has studied identities satisfied by the commutator and anti-commutator products in Novikov, Zinbiel, Leibniz and assosymmetric algebras, see \cite{Dzhumadil'daev2},\cite{Dzhumadil'daev3},\cite{Dzhumadil'daev4},\cite{Dzhumadil'daev5}, \cite{Dzhumadil'daev6} and \cite{Dzhumadil'daev7}.          

An algebra with identities 
\begin{equation}\label{f1}a(bc)=b(ac)
\end{equation}
\begin{equation}\label{f2}(ab)c=(ac)b
\end{equation}
is called {\it  bicommutative.} The identity (\ref{f1}) is called {\it left-commutative} and the identity (\ref{f2}) is called {\it right-commutative}. The bicommutative algebras are studied in papers \cite{Burde},\cite{Drensky-Zhakh},\cite{Drensky},\cite{Dzh-Tul},\cite{Dzh-Ism-Tul} and \cite{Kaygorodov-Volkov}. 

In our paper we  study such questions for bicommutative algebras. Let $\mathcal{B}icom$ be a variety of bicommutative algebras. Define $\mathcal{B}icom^{(-)}$ and $\mathcal{B}icom^{(+)}$ as  classes of algebras of a form $A^{(-)}$ and $A^{(+)},$ where $A\in  \mathcal{B}icom.$ 
Recall that {\it commutator} product is defined by  $$[x,y]=xy-yx$$   and {\it anti-commutator} product  by $$\{x,y\}=xy+yx.$$ 
We say that  $A^{(-)}$ and $A^{(+)}$ are  commutator and the anti-commutator algebras of $A\in\mathcal{B}icom$ recpectively. In \cite{Burde} and \cite{Dzh-Tul} there were shown that any algebra of $\mathcal{B}icom^{(-)}$ is a metabelian Lie algebra. We prove that every identity satisfied by the commutator product in all bicommutative algebras is a consequence of anti-commutativity, the Jacobi and the metabelian identities. In the anti-commutator case, we obtain two identities in degree 4, one of them was given in \cite{Dzh-Tul}. We prove that every identity satisfied by the anti-commutator product in all bicommutative algebras is a consequence of commutativity and these two identities. Furthermore, we show that these identities in degree 4 are independent in a free bicommutative algebra.

Let $X$ be a set and $As(X)$  be a free associative algebra generated by $X$. A polynomial in $As(X)$ is called ${\it Lie}$ element of $As(X)$ if it can be expressed by elements of $X$ in terms of commutators. Similarly, a polynomial in $As(X)$ is called ${\it Jordan}$ element of $As(X)$ if it can be expressed by elements of $X$ in terms of anti-commutators. There are two well-known Lie criterions, Dynkin-Specht-Wever criterion \cite{Dynkin},\cite{Specht}, \cite{Wever} and Friedrich's criterion \cite{Friedrichs}. On the other hand,  there is still no a criterion that determines all Jordan elements in $As(X).$  This problem is solved only for some subspaces of the space of all Jordan elements \cite{Cohn},\cite{Robbins}. These problems were considered for other varieties of algebras. For example, Lie criterion for pre-Lie algebras is given in \cite{Markl}, Jordan criterion for Leibniz algebras is given in \cite{Dzhumadil'daev6}.

We consider the analogues of these questions for bicommutative algebras. Let $Bicom(X)$  be a free bicommutative algebra generated by $X$. We give Lie and Joran criterions that enable one to determine whether an element in $Bicom(X)$ is a Lie and a Jordan element, respectively.

\section{\label{nn}\ Statement of results.}
We consider all algebras over a field ${\bf K}$ of characteristic 0, hence any identity is equivalent to a finite set of multilinear identities, see chapter 1 in \cite{ZSSS}. Therefore, we may restrict our attention to multilinear identities. 

\begin{theorem}\label{Lie identities}
Any identity satisfied by the commutator in every bicommutative algebra is a consequence of anti-commutativity, the Jacobi and the metabelian identities:
\begin{equation}\label{anti-com}[a,b]=-[b,a],\end{equation}
\begin{equation}\label{jacobi}[[a,b],c]+[[b,c],a]+[[c,a],b]=0,\end{equation}
\begin{equation}\label{metabelian}[[a,b],[c,d]]=0.\end{equation}
\end{theorem}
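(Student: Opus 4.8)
The plan is to show that the variety of Lie algebras generated by $\mathcal{B}icom^{(-)}$ is exactly the variety $\mathcal{M}$ of metabelian Lie algebras, so that the T-ideal of commutator identities of bicommutative algebras coincides with the T-ideal generated by anti-commutativity, Jacobi and (\ref{metabelian}). One inclusion is cited already: every algebra in $\mathcal{B}icom^{(-)}$ is metabelian Lie \cite{Burde},\cite{Dzh-Tul}, so every consequence of (\ref{anti-com})--(\ref{metabelian}) is automatic, and conversely every commutator identity of bicommutative algebras certainly holds in $\mathcal{B}icom^{(-)}$; what must be proved is that \emph{no extra} multilinear identity holds. Equivalently, in each multilinearity degree $n$ the $S_n$-module of multilinear commutator identities of bicommutative algebras must be shown to be no larger than that of metabelian Lie algebras.

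The concrete route I would take is to exhibit, for every $n$, a single bicommutative algebra whose commutator algebra contains the free metabelian Lie algebra of rank $n$ (or at least realizes its multilinear component faithfully). A natural candidate is built from a free (or sufficiently large) bicommutative algebra itself: by the known structure of $Bicom(X)$ (it is spanned by left-normed words $x_{i_0}(x_{i_1}\cdots x_{i_k})$ with the tail symmetric, reflecting (\ref{f1}) and (\ref{f2})), products of three or more generators already lie in a very small space, and one checks directly that $[[a,b],c]$ and its relatives behave as in the free metabelian Lie algebra. So the key step is: take $a_1,\dots,a_n$ to be the free generators of $Bicom(X)$, compute the multilinear commutator $[\dots[[a_1,a_{\sigma(2)}],a_{\sigma(3)}],\dots,a_{\sigma(n)}]$ inside $Bicom(X)$, and verify that the only linear relations among these elements (for varying $\sigma$) are those forced by anti-commutativity, Jacobi and metabelian — i.e. the relations of the free metabelian Lie algebra. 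Since the free metabelian Lie algebra on $n$ generators has a well-known basis (for $n\ge 3$, the multilinear part in degree $n$ has dimension $(n-2)\cdot n!/n = (n-2)(n-1)!$ coming from elements $[a_i,a_j]$ followed by an ordered tail $j<k_1<\cdots$, with one $i>j$), the proof reduces to a dimension count plus linear independence of the corresponding commutators in $Bicom(X)$.

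Carrying this out, I would first recall or re-derive the multiplication rule in $Bicom(X)$ on words of length $\ge 2$: write a basis monomial as $u = x_{i_0}\cdot x_{i_1}\cdots x_{i_k}$ with $i_1\le \cdots\le i_k$, and record that $u\cdot x_j$ appends $j$ into the sorted tail, while $x_j\cdot u$ is $x_j$ times the same multiset, so all length-$\ge 3$ products are iterated in this left-normed, tail-commutative way. Then expand $[a,b]=ab-ba$ and the longer commutators in this basis; the metabelian identity (\ref{metabelian}) should drop out because any commutator $[[a,b],[c,d]]$ expands into words in which \emph{two} different letters sit in the ``head'' position, which is impossible — every basis word has a single head letter — forcing the result to be $0$. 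Finally, linear independence of the surviving left-normed commutators is checked by reading off their leading terms (e.g. the coefficient of the basis word $x_{i_1}\cdot x_{i_0}x_{i_2}\cdots x_{i_n}$), which are distinct and triangular with respect to an ordering of the permutations.

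The main obstacle I anticipate is the bookkeeping in the inductive expansion of long commutators in the bicommutative basis and, in particular, making the linear-independence argument clean: one must choose the right ordered basis of multilinear left-normed Lie monomials modulo Jacobi+metabelian (the classical ``Hall-type'' basis for free metabelian Lie algebras) and then match it bijectively to a triangular family of leading monomials in $Bicom(X)$. Anti-commutativity and Jacobi give the reduction to left-normed monomials $[[\cdots[a_{i_0},a_{i_1}],a_{i_2}]\cdots]$ with $i_1 < i_2 < \cdots < i_n$ and $i_0 > i_1$; once that normal form is fixed, showing these are linearly independent in $Bicom(X)^{(-)}$ is the crux, and everything else (both inclusions of T-ideals, passage from multilinear identities to all identities in characteristic $0$) is formal.
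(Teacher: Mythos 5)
Your proposal follows essentially the same route as the paper: cite that every algebra in $\mathcal{B}icom^{(-)}$ is metabelian Lie, reduce a putative new multilinear identity to a combination of the left-normed basis elements $[[\cdots[[x_1,x_{i}],x_2],\cdots],x_n]$ of the free metabelian Lie algebra, expand these in the Young-diagram basis of $Bicom(X)$ (the paper's Lemma \ref{Lie expansion}), and conclude linear independence by reading off the coefficients of the shape-$(2,1^{n-2})$ diagrams, which is exactly the ``triangular leading term'' argument you describe. One slip worth noting: the multilinear component of the free metabelian Lie algebra in degree $n$ has dimension $n-1$, not $(n-2)(n-1)!$, but your own normal form ($i_1<i_2<\cdots$ with $i_0>i_1$, which forces $i_1$ to be the smallest index) yields the correct count, so the argument is unaffected.
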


\begin{theorem}\label{Jordan identities}
Any identity satisfied by the anti-commutator in every bicommutative algebra is a consequence of commutativity, the minus-Tortken and weak right-commutativity identities:
\begin{equation}\label{com}\{a,b\}=\{b,a\},\end{equation}
\begin{equation}\label{Tortken-minus}\{\{a,b\},\{c,d\}\}-\{\{a,d\},\{c,b\}\}=-\{(a,b,c),d\}+\{(a,d,c),b\}\end{equation} 
\begin{equation}\label{weakrcom}\{\{\{a,b\},c\},d\}=\{\{\{a,b\},d\},c\}\end{equation}
where $(a,b,c)=\{a,\{b,c\}\}-\{\{a,b\},c\}$ is the associator of $a,b,c.$ 
\end{theorem}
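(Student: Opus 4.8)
The plan is to prove the two inclusions between the corresponding $T$-ideals separately. For the easy inclusion one verifies directly that (\ref{com}), (\ref{Tortken-minus}) and (\ref{weakrcom}) hold in $A^{(+)}$ for every bicommutative $A$, equivalently in $Bicom(X)^{(+)}$. Substituting $\{u,v\}=uv+vu$: commutativity is immediate; the weak right-commutativity identity (\ref{weakrcom}) expands into eight bicommutative monomials whose last two arguments are interchanged by (\ref{f2}); and both sides of the minus-Tortken identity (\ref{Tortken-minus}) expand into monomials that are matched term by term, using (\ref{f1}) on products whose inner right factor has length $\ge 2$ and (\ref{f2}) on products whose inner left factor has length $\ge 2$. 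I expect this step to be a finite, purely computational verification with no conceptual difficulty.

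For the converse, let $V$ be the variety of commutative (non-associative) algebras defined by (\ref{Tortken-minus}) and (\ref{weakrcom}), with free algebra $V(X)$. Since $\mathrm{char}\,\mathbf{K}=0$ it suffices to treat multilinear identities, so fix $n$ and work in the multilinear component $P_n(V)\subseteq V(X)$, an $S_n$-module. Evaluating anti-commutator monomials on the generators $x_1,\dots,x_n$ of $Bicom(X)$ and using the easy inclusion gives a surjective $S_n$-homomorphism $\varphi_n\colon P_n(V)\twoheadrightarrow J_n$, where $J_n\subseteq Bicom(X)$ is the span of the images of all multilinear anti-commutator monomials of degree $n$. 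A multilinear identity of $\mathcal{B}icom^{(+)}$ of degree $n$, reduced modulo the defining identities of $V$, is an element of $\ker\varphi_n$; conversely every nonzero element of $\ker\varphi_n$ lifts to such an identity not lying in the $T$-ideal generated by (\ref{Tortken-minus}) and (\ref{weakrcom}). Hence the theorem is equivalent to injectivity of every $\varphi_n$, i.e.\ to the numerical equality $\dim P_n(V)=\dim J_n$.

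I would compute $\dim J_n$ from the known monomial basis of the free bicommutative algebra, by first bringing the multilinear Jordan elements into a normal form (and, if useful, recording the $S_n$-character of $J_n$). For $P_n(V)$ I would produce a spanning set by rewriting: commutativity fixes the shape and orientation of a monomial; the minus-Tortken identity (\ref{Tortken-minus}), applied at internal nodes both of whose subtrees are nontrivial products, rewrites the monomials carrying such balanced subproducts in terms of monomials in which every internal node has a leaf child, modulo associator corrections $(a,b,c)$ of strictly lower complexity that are reduced in turn; and the weak right-commutativity identity (\ref{weakrcom}) sorts the iterated left factors of the resulting caterpillar-type monomials. This yields a spanning family of normal monomials of size at most $\dim J_n$, which together with surjectivity of $\varphi_n$ forces $\dim P_n(V)=\dim J_n$ and injectivity. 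The same low-degree bookkeeping gives the asserted independence in $Bicom(X)$ of (\ref{Tortken-minus}) and (\ref{weakrcom}): in degree $4$ one checks that dropping either identity enlarges the dimension of the corresponding free algebra beyond $\dim J_4$, so neither is a consequence of commutativity together with the other.

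The main obstacle is proving that this rewriting system is complete: that commutativity, (\ref{Tortken-minus}) and (\ref{weakrcom}) already suffice to bring every multilinear commutative monomial to the chosen normal form and that all overlap ambiguities resolve, so that no further family of identities appears in any degree. Keeping track of the associator corrections generated by (\ref{Tortken-minus}) and verifying that they never produce a normal monomial that is independent in $Bicom(X)$ is the delicate part, and it is precisely here that one sees why both degree-$4$ identities are required. The remaining ingredients --- the low-degree linear algebra and the already-available structure of $Bicom(X)$ --- are routine.
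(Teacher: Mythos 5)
Your proposal follows essentially the same route as the paper: verify that the three identities hold in every $A^{(+)}$, then compare dimensions --- the multilinear part of the free algebra on commutativity, minus-Tortken and weak right-commutativity is spanned by at most $2^{n-1}-1$ normal monomials (the paper's Lemma \ref{basis-plus}, obtained via the filtration $J_{n+2}=\{J_{n+1},J_1\}+\{J_n,J_2\}$ and two rewriting consequences of minus-Tortken), while $\dim J_n=2^{n-1}-1$ exactly because the Jordan elements of $Bicom(X)$ are precisely the symmetric ones. The only remark worth adding is that the ``main obstacle'' you name at the end --- confluence of the rewriting system and resolution of overlap ambiguities --- is not actually required: a spanning set of size at most $\dim J_n$ together with surjectivity of $\varphi_n$ already forces injectivity, exactly as you yourself observe earlier in the proposal.
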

\begin{remark}
Any Novikov algebra under anti-commutator satisfies an identity so-called Tortken, see \cite{Dzhumadil'daev2}
\begin{equation}\label{Tortken}
\{\{a,b\},\{c,d\}\}-\{\{a,d\},\{c,b\}\}=\{(a,b,c),d\}-\{(a,d,c),b\}.\end{equation}  
We note that (\ref{Tortken-minus}) differs from (\ref{Tortken}) in signs on the right hand-side and propose to call (\ref{Tortken-minus}) by the {\it minus-Tortken} identity. The identity (\ref{weakrcom}) is a right-commutativity identity of three elements if the first element is decomposable. For this reason, we call it by {\it weak right-commutativity} identity. \end{remark} Suppose that $X$ is an ordered set. Recall a basis of free bicommutative algebra generated by $X$ in \cite{Dzh-Ism-Tul}. In order to obtain base elements in degree $n$, we consider Young diagrams of form $(n)$ and $(n-k,1^k)$ where $k=1,\ldots,n-2.$ We fill the Young diagrams by elements of $X$ so that $x_1\leq x_2\leq\ldots\leq x_k,$ $y_1\leq y_2\leq\ldots\leq y_l$ and $k,l>0$ for $x_1,\ldots,x_k,y_1,\ldots,y_l\in X$ and then correspond them to monomials of bicommutative base elements as follows  

$$\ytableausetup
{mathmode, boxsize=1.2 em}
\begin{ytableau}
x_1 & y_1& y_2 & \dots
&  y_l \\
x_2 \\
\vdots \\
x_k\\
\end{ytableau}\longmapsto x_k(\cdots(x_2((\cdots((x_1y_1)y_2)\cdots)y_l))\cdots).$$

As an example, we give the construction of multilinear base elements of $Bicom(\{x,y,z\})$ and assume that $x<y<z.$
\begin{center}
$\ytableausetup
{mathmode, boxsize=1em}
\begin{ytableau}
x & y & z\\
\end{ytableau}\mapsto (xy)z,         \ytableausetup{mathmode, boxsize=1em}
\begin{ytableau}
y & x & z\\
\end{ytableau}\mapsto (yx)z,\ytableausetup{mathmode, boxsize=1em}
\begin{ytableau}
z & x & y\\
\end{ytableau}\mapsto (zx)y,$
\end{center}

\bigskip

\begin{center}
$\ytableausetup
{mathmode, boxsize=1em}
\begin{ytableau}
x & z\\
y\\
\end{ytableau}\mapsto y(xz),\quad \ytableausetup
{mathmode, boxsize=1em}
\begin{ytableau}
x & y\\
z\\
\end{ytableau}\mapsto z(xy),\quad \ytableausetup
{mathmode, boxsize=1em}
\begin{ytableau}
y & x\\
z\\
\end{ytableau}\mapsto z(yx).$
\end{center}
\bigskip

To formulate Jordan and Lie criterions, we introduce two linear maps, involution and Dynkin maps, on $Bicom(X)$.

Define a conjugation map $\ast:Bicom(X)\rightarrow Bicom(X)$ on base elements by
$$x\longmapsto x$$
and
\begin{center}
$\ytableausetup
{mathmode, boxsize=1.2 em}
\begin{ytableau}
x_1 & y_1& y_2 & \dots
&  y_l \\
x_2 \\
\vdots \\
x_k\\
\end{ytableau}\longmapsto\ytableausetup
{mathmode, boxsize=1.2 em}
\begin{ytableau}
y_1 & x_1& x_2 & \dots
&  x_k \\
y_2 \\
\vdots \\
y_l\\
\end{ytableau}.$
\end{center}
\begin{prp} 
$(a^{\ast})^{\ast}=a$ and
$(ab)^{\ast}=b^{\ast}a^{\ast}$ for any $a,b\in Bicom(X).$ In other words, $\ast$ is an involution map on $Bicom(X).$

Moreover, if $a$ and $b$ are homogenous and homogenous degrees are more than one, then $(ab)^{\ast}=a^{\ast}b^{\ast}.$
\end{prp}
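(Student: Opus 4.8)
The plan is to work directly with the combinatorial basis of $Bicom(X)$ described above, since the conjugation $\ast$ is defined on basis elements and extended linearly, so every claimed identity need only be checked on basis elements (and then, for the bilinear statements, on pairs of basis elements). First I would record the shape of a product of two basis elements. A basis element is either a generator $x$ or is encoded by a two-row-type Young diagram of shape $(n-k,1^k)$ corresponding to a monomial $u = x_k(\cdots(x_2((\cdots((x_1y_1)y_2)\cdots)y_l))\cdots)$; call $x_1$ its \emph{head}, $x_2,\dots,x_k$ its \emph{left tail}, and $y_1,\dots,y_l$ its \emph{right tail}. Using the defining identities (\ref{f1}) and (\ref{f2}) one shows that for basis elements $u,v$ the product $uv$ is again (up to reordering of entries, which is exactly what the Young-diagram normalization encodes) a single basis element whose column consists of the column entries of $u$ together with \emph{all} entries of $v$ except the head of $v$, and whose row is the head of $u$ followed by the row entries of $u$, with the head of $v$ inserted appropriately; in particular multiplication never introduces more than one first-row entry beyond what the factors already contribute. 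This normal-form lemma is really the technical heart, and I expect it to be the main obstacle: one must check that left-commutativity lets any newly multiplied-on-the-left factor be pushed into the column in sorted order, and that right-commutativity lets the right-tail entries be sorted, so that $uv$ lands on a genuine basis element with a predictable diagram.

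Granting the normal-form description, the three assertions follow by bookkeeping on diagrams. For $(a^\ast)^\ast = a$: the map $\ast$ swaps the roles of $(x_1;x_2,\dots,x_k)$ and $(y_1;y_2,\dots,y_l)$ — i.e. it transposes the ``$L$-shaped'' diagram by exchanging the first column (below the corner) with the first row (right of the corner), keeping both sorted — and doing this twice is the identity, while generators are fixed; so it suffices to observe the involutivity on each basis element, which is immediate from the symmetry of the definition. For $(ab)^\ast = b^\ast a^\ast$: take $a,b$ basis elements, compute the diagram of $ab$ via the normal-form lemma, apply $\ast$, and separately compute $b^\ast$, $a^\ast$ and the diagram of their product; both sides turn out to be the basis element whose row is the head of $b$ followed by $b$'s row then $a$'s head inserted, and whose column is the rest — i.e. the entries get swapped in exactly the compatible way, because $\ast$ interchanges ``contributes its head to the row'' with ``contributes its head to the column.'' Since $\ast$ is linear and multiplication is bilinear, checking the equation on basis pairs suffices for all $a,b \in Bicom(X)$.

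Finally, for the last sentence: when $a$ and $b$ are homogeneous of degree $>1$, each has a nonempty column, so in the product the head of $a$ goes to the row but otherwise the column of $ab$ is the multiset union of the columns of $a$ and $b$, symmetrically in a way that makes the diagram of $ab$ unchanged under the \emph{further} transposition that distinguishes $a^\ast b^\ast$ from $b^\ast a^\ast$; concretely, when both degrees exceed one the extra entry that $\ast$ would move is already forced into the column, so $(ab)^\ast = a^\ast b^\ast$ and $b^\ast a^\ast$ coincide as basis elements. I would present this as: compare the diagram of $(ab)^\ast$ with that of $a^\ast b^\ast$ entry by entry using the normal-form lemma, noting that the only discrepancy between $a^\ast b^\ast$ and $b^\ast a^\ast$ can involve a first-row entry coming from a degree-one factor, which is excluded by hypothesis. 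The whole argument is thus: (i) prove the normal-form/diagram-multiplication lemma (the real work), then (ii) read off involutivity, the anti-homomorphism property, and the degree-$>1$ homomorphism property as three short diagram computations, extended by (bi)linearity.
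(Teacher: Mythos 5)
Your overall strategy---reduce everything to basis elements, establish a normal-form rule for the product of two basis monomials, and then read off the three identities by bookkeeping on diagrams---is the right one; the paper itself does nothing more than cite exactly such a multiplication rule (Lemma 3.3 of \cite{Dzh-Ism-Tul}). The problem is that the multiplication rule you state, which you correctly identify as the technical heart of the argument, is wrong. Write a basis element as $[C;R]$, where $C=\{x_1,\dots,x_k\}$ is the first column including the corner $x_1$ and $R=\{y_1,\dots,y_l\}$ is the rest of the first row. The correct rule, which follows from (\ref{f1}) and (\ref{f2}), is that for $u=[C_u;R_u]$ and $v=[C_v;R_v]$ both of degree at least two one has $uv=[\,C_u\cup C_v\,;\,R_u\cup R_v\,]$: columns merge with columns, rows with rows, and nothing crosses over. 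For example, left-commutativity gives $(xy)(zt)=z((xy)t)$, which is the basis element with column $\{x,z\}$ and row $\{y,t\}$; in particular the head $z$ of the right factor lands in the \emph{column} and the row entry $t$ stays in the \emph{row}. Your rule instead sends all of $v$ except its head into the column and the head of $v$ into the row, which for $v=zt$ predicts a column containing $t$ and a row containing $z$---exactly backwards. (Your formula is correct only when $v$ is a single generator, where it degenerates to ``right multiplication by $c$ appends $c$ to the row''; it does not extrapolate to general $v$. As stated it is also not self-consistent, since the head of $u$ appears both in the column of $u$ and in your description of the row of $uv$.)

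This matters because the degree-$\geq 2$ product rule is precisely what drives the final assertion of the proposition. With the correct rule the bookkeeping you envisage does work: $\ast$ is the transposition $[C;R]\mapsto[R;C]$, so $(uv)^{\ast}=[R_u\cup R_v;C_u\cup C_v]=v^{\ast}u^{\ast}=u^{\ast}v^{\ast}$ when both degrees exceed one, while for a generator $c$ one checks $c\cdot[C;R]=[C\cup\{c\};R]$ and $[C;R]\cdot c=[C;R\cup\{c\}]$, which yields $(ab)^{\ast}=b^{\ast}a^{\ast}$ in the remaining cases and explains why $a^{\ast}b^{\ast}$ fails there. So the repair is local---replace your product formula by the correct one and redo the three verifications---but as written the central lemma is false and the diagram computations built on it do not go through.
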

\begin{proof} Follows from Lemma 3.3 of \cite{Dzh-Ism-Tul}.
\end{proof}
An element $f\in Bicom(X)$ is called {\it symmetric} and {\it skew-symmetric} if $f^{\ast}=f$ and $f^{\ast}=-f,$ respectively.
\begin{theorem}\label{Jordan element}
Let $f$ be an element of $Bicom(X).$ Then $f$ is Jordan if and only if $f$ is symmetric.  
\end{theorem}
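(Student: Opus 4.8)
\emph{Plan of proof.} The easy inclusion is that every Jordan element is symmetric. By the preceding Proposition $\ast$ is an involution with $(ab)^{\ast}=b^{\ast}a^{\ast}$, hence for the anti-commutator
$$\{a,b\}^{\ast}=(ab)^{\ast}+(ba)^{\ast}=b^{\ast}a^{\ast}+a^{\ast}b^{\ast}=\{a^{\ast},b^{\ast}\};$$
since $x^{\ast}=x$ for $x\in X$, an induction on the number of anti-commutators in a Jordan expression gives $f^{\ast}=f$ for every Jordan $f$. Write $J\subseteq Bicom(X)$ for the subspace of Jordan elements and $S\subseteq Bicom(X)$ for the subspace of symmetric elements; thus $J\subseteq S$, and it remains to prove $S\subseteq J$. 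Both $J$ and $S$ are graded for the $X$-multidegree grading of $Bicom(X)$ (the anti-commutator and $\ast$ both preserve multidegree), so over a field of characteristic $0$ a polarization--restitution argument reduces $S\subseteq J$ to its multilinear components: here one uses that $\ast$ commutes with substitutions of the generators by linear combinations of generators (it is defined by a purely positional rule on the Young fillings), so polarizing a symmetric element gives a symmetric multilinear element, while restituting a Jordan element gives a Jordan element. It therefore suffices to prove $S\cap P_{n}=J\cap P_{n}$ for the multilinear component $P_{n}$ of degree $n$ in $Bicom(X)$ ($n=1$ being trivial).

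Next I would determine $\dim(S\cap P_{n})$ from the basis recalled above. Counting fillings of the row diagram $(n)$ and of the hooks $(n-k,1^{k})$, $1\le k\le n-2$, by $x_{1},\dots,x_{n}$ (for a shape with first row of length $q$ one must only choose which $q-1$ variables fill that row, the rest filling the first column in increasing order), one gets $\dim P_{n}=\sum_{q=2}^{n}\binom{n}{q-1}=2^{n}-2$. By the Proposition, $\ast$ carries a basis monomial to a basis monomial, so it permutes this basis, and being an involution it splits it into fixed monomials and transposed pairs. For $n\ge 2$ there are no fixed monomials: the corner of $u^{\ast}$ is the entry sitting immediately to the right of the corner of $u$ in its top row, a different variable. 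Hence the basis splits into $2^{n-1}-1$ transposed pairs, the vectors $u+u^{\ast}$ span $S\cap P_{n}$, so $\dim(S\cap P_{n})=2^{n-1}-1$, and together with $J\subseteq S$ this already gives $\dim(J\cap P_{n})\le 2^{n-1}-1$.

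The crux is then to produce $2^{n-1}-1$ linearly independent multilinear Jordan elements. The plan is to use Theorem \ref{Jordan identities}: commutativity and weak right-commutativity let one sort the ``outer'' generators acting on a decomposable argument, while the minus-Tortken identity rewrites a product of two decomposable elements as a combed Jordan monomial plus correction terms of the same degree; iterating this puts every multilinear Jordan monomial into a normal form. One then checks that the normal-form monomials are indexed by a set of cardinality $2^{n-1}-1$ and that they are linearly independent in $Bicom(X)$, by expanding each in the Young basis and isolating a triangular leading term --- for instance, ordering the normal forms so that the $2^{n-1}-1$ basis monomials with corner $x_{1}$, which form a transversal of the $\ast$-pairs, occur as pairwise distinct leading terms. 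This yields $\dim(J\cap P_{n})=2^{n-1}-1=\dim(S\cap P_{n})$, hence $J\cap P_{n}=S\cap P_{n}$, and by the reduction of the previous paragraphs $f\in Bicom(X)$ is Jordan if and only if it is symmetric.

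The main obstacle is precisely this last step: organizing the reduction of Jordan monomials so that the minus-Tortken corrections do not disturb the already-sorted deeper brackets, and then establishing linear independence of the normal forms inside $Bicom(X)$. An equivalent but possibly more transparent variant is to prove directly, by induction on degree, that $u+u^{\ast}\in J$ for every basis monomial $u$ (these span $S\cap P_{n}$): left- and right-commutativity of $Bicom$ let one slide the left and right multipliers of $u$ past each other, and the minus-Tortken identity absorbs the mixed terms, but one must carry an induction hypothesis strong enough to re-express the degree-$n$ error terms that appear along the way, and pinning that hypothesis down is the real work.
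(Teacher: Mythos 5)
Your forward direction (Jordan $\Rightarrow$ symmetric) is correct and in fact tidier than the paper's: the one‑line computation $\{a,b\}^{\ast}=(ab)^{\ast}+(ba)^{\ast}=\{a^{\ast},b^{\ast}\}$ plus induction replaces the paper's Lemma \ref{Jordan-symmetric}, which instead runs an induction through the filtration $J_{n+2}=\{J_{n+1},J_1\}+\{J_{n},J_2\}$ of Lemma \ref{Filtr-plus} and explicit tableau expansions. Your count of the symmetric multilinear subspace, $2^{n-1}-1$, is also right. The genuine gap is the converse inclusion (symmetric $\Rightarrow$ Jordan), which is the real content of the theorem and which you leave as a plan with the decisive step explicitly unresolved in both of your proposed routes. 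The dimension route carries an extra hazard: Lemma \ref{basis-plus} only produces a \emph{spanning} set of the multilinear Jordan space of size $2^{n-1}-1$, i.e.\ an upper bound on its dimension, which together with the easy inclusion proves nothing; and in the paper the matching \emph{lower} bound used in the proof of Theorem \ref{Jordan identities} is itself derived from the symmetric-implies-Jordan lemma, so appealing to Theorem \ref{Jordan identities} here without an independent linear-independence argument for your normal forms would be circular.

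The paper closes the gap with Lemma \ref{symmetric-Jordan}, which is exactly your ``more transparent variant'', and the induction hypothesis you could not pin down is just the naive one --- every $w+w^{\ast}$ with $w$ a basis monomial of degree less than $n$ is Jordan --- supplemented by a secondary descending induction on the length of the first column; no degree-$n$ error terms ever need to be re-expressed. Concretely, writing $(a_1a_2\cdots a_m)$ for the left-normed row monomial, the base case is
$$((ab)c)^{(+)}=\tfrac12\{\{a,b\},c\}+\tfrac12\{\{a,c\},b\}-\tfrac12\{\{b,c\},a\},$$
the symmetrized row tableau of degree $n$ is
$$\tfrac12\{(a_1\cdots a_{n-1})^{(+)},a_n\}+\tfrac12\{(a_1\cdots a_{n-2}a_n)^{(+)},a_{n-1}\}-\tfrac14\{(a_1\cdots a_{n-2})^{(+)},\{a_{n-1},a_n\}\},$$
all of whose ingredients are Jordan by the degree induction, and the symmetrized hook tableau equals $\{u^{(+)},a_n\}-v^{(+)}$, where $u$ is the tableau with the last column entry $a_n$ deleted (lower degree, hence Jordan by induction) and $v$ is the same-degree tableau obtained by moving $a_n$ from the column to the end of the row (strictly shorter column, hence Jordan by the secondary induction, which terminates at the row case). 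If you prefer to keep your dimension-counting framework, you must still expand your $2^{n-1}-1$ normal forms in the Young basis and prove their linear independence inside $Bicom(X)$; that computation is essentially the same work in a different order.
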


The {\it Dynkin map} $D:Bicom(X)\rightarrow Bicom(X)$
is a linear map, defined on base elements as follows
$$\ytableausetup
{mathmode, boxsize=1.2 em}
\begin{ytableau}
a_1 & b_1& b_2 & \dots
&  b_l \\
a_2 \\
\vdots \\
a_k\\
\end{ytableau}\longmapsto\frac{1}{2}[a_k, [\cdots[a_2,[[\cdots[[a_1, b_1], b_2]\cdots], b_l]]\cdots]].$$
where $k+l=n.$
We define {\it head} of $f\in Bicom(X)$, $head(f),$ as a projection to a linear space, generated by base elements, where $x_1$ appears in the second column of Young diagram $(n)$ and appears in the first column of Young diagram $(2,1^{n-2})$, that is, 

\begin{center}
$\ytableausetup 
{mathmode, boxsize=1.2 em}
\begin{ytableau}
$ $ & x_1 & $ $ & \dots
&  $ $ 
\end{ytableau}
$ \quad \quad and \quad  \quad  $\ytableausetup
{mathmode, boxsize=1.2 em}
\begin{ytableau}
x_1 & $ $ \\
$ $ \\
\vdots \\
$ $\\
\end{ytableau}.$
 \end{center}
The {\it tail} of $f$ is the remaining part of $f$ from $head(f)$, that is, $tail(f)=f-head(f).$ 
As example, if

$f=((x_1x_2)x_3)x_4+((x_3x_1)x_2)x_4-x_2((x_1x_3)x_4)-x_4((x_3x_1)x_2)-$$$x_4(x_3(x_2x_1))-x_4(x_2(x_1x_3))=$$

$\ytableausetup
{mathmode, boxsize=1.2 em}
\begin{ytableau}
x_1 & x_2& x_3
&  x_4 
\end{ytableau}+\ytableausetup
{mathmode, boxsize=1.2 em}
\begin{ytableau}
x_3 & x_1& x_2
&  x_4 
\end{ytableau}-\begin{ytableau}
x_1 & x_3& x_4  \\
x_2\\
\end{ytableau}-\begin{ytableau}
x_3 & x_1& x_2  \\
x_4\\
\end{ytableau}-\begin{ytableau}
x_2 & x_1 \\
x_3\\
x_4\\
\end{ytableau}-\begin{ytableau}
x_1 & x_3 \\
x_2\\
x_4\\
\end{ytableau}$

then 

$$head(f)=\ytableausetup
{mathmode, boxsize=1.2 em}
\begin{ytableau}
x_3 & x_1& x_2
&  x_4 
\end{ytableau}-\begin{ytableau}
x_1 & x_3 \\
x_2\\
x_4\\
\end{ytableau}$$

and 

$$tail(f)=\ytableausetup
{mathmode, boxsize=1.2 em}
\begin{ytableau}
x_1 & x_2& x_3
&  x_4 
\end{ytableau}-\begin{ytableau}
x_1 & x_3& x_4  \\
x_2\\
\end{ytableau}-\begin{ytableau}
x_3 & x_1& x_2  \\
x_4\\
\end{ytableau}-\begin{ytableau}
x_2 & x_1 \\
x_3\\
x_4\\
\end{ytableau}.$$

\begin{theorem}\label{Lie elements}
Let $f\in Bicom(X).$ Then $f$ is a Lie element if and only if $D(head(f))=f.$
\end{theorem}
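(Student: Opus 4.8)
The plan is to characterize Lie elements in $Bicom(X)$ by combining the structural description of $\mathcal{B}icom^{(-)}$ (metabelian Lie algebras, from \cite{Burde},\cite{Dzh-Tul}) with the explicit basis of $Bicom(X)$ recalled above and the two maps $D$ and $head$. First I would make precise what ``Lie element'' means: $f \in Bicom(X)$ is Lie if it lies in the subalgebra of $Bicom(X)^{(-)}$ generated by $X$ under the commutator bracket. Since every commutator bracket subalgebra of a bicommutative algebra is a metabelian Lie algebra (Theorem \ref{Lie identities}), the span of all Lie elements is a homomorphic image of the free metabelian Lie algebra $M(X)$; and conversely, because the free metabelian Lie algebra embeds into a free bicommutative algebra via the bracket (this is essentially the content of Theorem \ref{Lie identities}, which says no further identities hold), the space of Lie elements in $Bicom(X)$ is isomorphic to $M(X)$. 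So the theorem amounts to: $D(head(f)) = f$ exactly characterizes the image of $M(X)$ inside $Bicom(X)$.

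Next I would identify that image explicitly in terms of the Young-diagram basis. A standard basis of the free metabelian Lie algebra in degree $n$ is given by the left-normed brackets $[[\cdots[[x_{i_1},x_{i_2}],x_{i_3}]\cdots],x_{i_n}]$ with $i_1 > i_2$ and $i_2 \le i_3 \le \cdots \le i_n$ (plus the degree-one elements). Under the bracket in $Bicom(X)$, I would expand such a left-normed bracket into the bicommutative basis and observe, using left- and right-commutativity, that it is a combination of the hook-type basis elements of shapes $(n)$ and $(n-k,1^k)$, and moreover that its $head$ — the projection onto the single basis vector of shape $(n-1,1)$ with $x_1$ in position $(1,1)$ and the single vector of shape $(2,1^{n-2})$ with $x_1$ at the top of the first column — records exactly the ``leading term''. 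In other words, I would check that $D$ is a left inverse to this expansion map composed with $head$: computing $D(head(\text{expansion of a bracket monomial}))$ returns the bracket monomial itself. The map $D$ is designed precisely so that the head datum $\begin{ytableau} a_1 & b_1 & \cdots & b_l \\ a_2 \\ \vdots \\ a_k \end{ytableau}$ reassembles into $\tfrac12[a_k,[\cdots[a_2,[[\cdots[[a_1,b_1],b_2]\cdots],b_l]]\cdots]]$, so this is the crux: show $D \circ head$ acts as the identity on the Lie subspace, and that $f - D(head(f))$, when $f$ is Lie, must vanish because it is a Lie element whose head is zero and the only Lie element with zero head is $0$ (by the basis description of $M(X)$, the head of a nonzero Lie element is nonzero).

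For the converse, if $D(head(f)) = f$ then $f$ is manifestly Lie, since $D$ lands in the commutator subalgebra by construction; $head(f)$ is some linear combination of the two distinguished basis families, and $D$ of each is an explicit iterated commutator, so $f = D(head(f))$ exhibits $f$ as a Lie element. Thus the only substantive direction is the forward one, and within it the main obstacle I anticipate is the bookkeeping in the expansion step: one must verify that applying the bicommutative identities (\ref{f1}) and (\ref{f2}) to a left-normed Lie bracket produces a bicommutative-basis expression whose $head$ component is exactly the ``$x_1$-in-the-corner'' piece matching the $M(X)$ basis vector, with the correct coefficient $1$ (hence the $\tfrac12$ in $D$, coming from $[x,y] = xy - yx$ being read off a single diagram box). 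Once the coefficient is pinned down and the metabelian relations are used to collapse any bracket with two nontrivial bracketed slots to zero, the identity $D\circ head = \mathrm{id}$ on Lie elements follows, and the characterization is complete.
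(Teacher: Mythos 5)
Your plan matches the paper's proof: the easy direction is exactly the observation that $D$ lands in $L(X)$, and for the converse the paper likewise writes $f$ in the metabelian basis $\sum_{i}\lambda_i[[\cdots[[x_1,x_i],x_2]\cdots],x_n]$, expands each bracket monomial into the Young-diagram basis (its Lemma \ref{Lie expansion}), reads off $head(f)$ as the $x_1$-in-the-corner terms, and checks by direct computation that $D$ returns each monomial with coefficient $1$ (the factor $\tfrac12$ cancelling via anti-commutativity), which is precisely the bookkeeping you flag as the crux. The only slip is your description of $head$ as involving shape $(n-1,1)$: per the paper's definition it is the projection onto shape-$(n)$ diagrams with $x_1$ in the second box together with shape-$(2,1^{n-2})$ diagrams with $x_1$ at position $(1,1)$, and these two families are exactly what survives in the expansion.
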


By $L(X)$ and $J(X)$ we denote the free commutator and free anti-commutator algebras, respectively  generated on $X$ of $Bicom(X).$ Let $L_n$ and $J_n$ be their homogenous part  of degree $n$, respectively. An element of $Bicom(X)$ is called Lie if it belongs to $L(X).$ Similarly an element of $Bicom(X)$ is called Jordan if it belongs to $J(X).$ 

\begin{corollary}\label{relation}
$L_n\cap J_n=\emptyset$ if $n$ is even,
$L_n\subseteq J_n$ if $n$ is odd.\end{corollary}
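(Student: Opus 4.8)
\textbf{Proof proposal for Corollary \ref{relation}.} The plan is to read off both statements from Theorems \ref{Jordan element} and \ref{Lie elements} together with the explicit combinatorial description of the involution $\ast$ and the Dynkin map $D$ on the Young-diagram basis. First I would record the effect of $\ast$ on the two shapes. On the row shape $(n)$, the basis element indexed by $(x_1\mid y_1,\ldots,y_{n-1})$ is sent to the one indexed by $(y_1\mid x_1,y_2,\ldots,y_{n-1})$; on the hook $(n-k,1^k)$ the first column and the first row are interchanged. The key observation is that $\ast$ preserves total degree $n$ and permutes the basis, so a multilinear element $f$ of degree $n$ is symmetric iff it is fixed by the induced permutation, and skew-symmetric iff it is negated; a Lie element, being also Jordan-or-not, is governed by Theorem \ref{Jordan element}.

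For the odd case $L_n\subseteq J_n$: let $f$ be Lie, so by Theorem \ref{Lie elements} $f=D(head(f))$, i.e.\ $f$ is a linear combination of left-normed commutators $\tfrac12[a_k,[\cdots[[a_1,b_1],b_2]\cdots]]$. It suffices to show each such right-nested-in-commutators monomial, expanded in the bicommutative basis, is symmetric when $n$ is odd. I would argue that applying $\ast$ to the bicommutative expansion of a commutator monomial corresponds, up to sign, to reversing the roles that produce the two columns, and that this sign is exactly $(-1)^{n-1}$: each transposition inside a commutator $[u,v]=uv-vu$ contributes a sign, and in a fully iterated bracket of $n$ letters there are $n-1$ bracket operations. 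Hence for $n$ odd the total sign is $+1$ and every Lie element is symmetric, so by Theorem \ref{Jordan element} it is Jordan; thus $L_n\subseteq J_n$. (The inclusion is typically strict, but the statement only claims containment.)

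For the even case $L_n\cap J_n=\emptyset$: here I must be slightly careful with the statement, since $0$ lies in every subspace; I read ``$=\emptyset$'' as ``$=0$'', i.e.\ the only element that is simultaneously Lie and Jordan is zero. By the sign computation above, for $n$ even the bicommutative expansion of any iterated commutator monomial satisfies $f^\ast=-f$, so every nonzero Lie element of even degree is skew-symmetric. On the other hand, by Theorem \ref{Jordan element} every Jordan element is symmetric. An element that is both must satisfy $f^\ast=f$ and $f^\ast=-f$, hence $2f=0$, hence $f=0$ in characteristic $0$. Therefore $L_n\cap J_n=0$ for even $n$.

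The main obstacle is the sign bookkeeping in the second paragraph: one must verify that expanding an arbitrary left-normed Lie monomial (of the special shape produced by $D$, namely brackets nested as $[a_k,[\cdots[[\cdots[a_1,b_1],\ldots,b_l]]\cdots]]$) into the Young-diagram basis of $Bicom(X)$ transforms under $\ast$ by the uniform sign $(-1)^{n-1}$, using Proposition's rule $(ab)^\ast=b^\ast a^\ast$ for degree-one factors and $(ab)^\ast=a^\ast b^\ast$ otherwise, together with the bicommutative identities (\ref{f1})--(\ref{f2}) to bring products into basis form. I would handle this by induction on $n$: strip off the outermost bracket $[a_k,-]=a_k\cdot(-)-(-)\cdot a_k$, apply the inductive sign $(-1)^{n-2}$ to the inner part, and check that the two terms recombine, after normalizing via (\ref{f1})--(\ref{f2}), into $(-1)^{n-1}$ times the $\ast$-image; the base case $n=2$, where $[a,b]=ab-ba$ and $(ab)^\ast=ba$, gives the sign $-1=(-1)^{2-1}$ directly.
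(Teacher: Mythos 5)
Your proposal is correct and follows essentially the same route as the paper: the paper's one-line proof combines Theorem \ref{Jordan element} (Jordan $\Leftrightarrow$ symmetric) with Lemma \ref{Lie expansion}, which is precisely the sign statement you identify as the main obstacle — that an iterated commutator of $n$ letters expands into $e^{(+)}$ basis combinations for $n$ odd and $e^{(-)}$ combinations for $n$ even. The induction you sketch (stripping the outer bracket and using the two recombination relations) is exactly how the paper proves that lemma, so nothing in your argument needs to be redone.
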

A subspace of symmetric elements is generated by elements of a form $e^{(+)}=e+e^\ast,$ similarly a subspace of skew-symmetric elements is generated by elements of a form $e^{(+)}=e+e^\ast$, where $e$ is the base element of $Bicom(X).$

\section{\label{nn}\ Proof of Theorems \ref{Jordan identities} and \ref{Jordan element}}
\begin{lemma}\label{Filtr-plus}
$J_{n+2}=\{J_{n+1},J_1\}+\{J_{n},J_2\}$ for $n>0.$ 
\end{lemma}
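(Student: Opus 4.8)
The plan is to prove this by combining two ingredients: the explicit normal form of Jordan elements coming from the Young-diagram basis of $Bicom(X)$, and the degree-$4$ identities of Theorem \ref{Jordan identities}, in particular the weak right-commutativity identity (\ref{weakrcom}). First I would observe that the inclusion $\supseteq$ is trivial: both $\{J_{n+1},J_1\}$ and $\{J_n,J_2\}$ lie in $J_{n+2}$ by definition of $J(X)$ as a subalgebra under the anti-commutator. So the content is the inclusion $\subseteq$, i.e. that every anti-commutator monomial of degree $n+2$ can be rewritten, modulo the bicommutative relations, as a sum of products $\{u,v\}$ with $u\in J_{n+1},v\in J_1$ or $u\in J_n,v\in J_2$.

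The key step is a normal-form argument for Jordan elements. Working in $Bicom(X)$, the Young-diagram basis shows that every base element of degree $m\ge 2$ is of the shape ``act on the left by single generators on a left-normed tail''; concretely $x_k(\cdots(x_2((\cdots(x_1y_1)y_2\cdots)y_l))\cdots)$. I would show by induction on $n$ that $J_{n+2}$ is spanned by left-multiplications $\{a,\cdot\}$ applied to lower-degree Jordan elements, together with the ``hook'' pieces coming from the $(n-k,1^k)$ diagrams, and that the latter are exactly accounted for by the $\{J_n,J_2\}$ summand. The mechanism is: any product $\{\{\cdots\},c\}$ with an iterated anti-commutator in the first slot can, via (\ref{weakrcom}), have its last two right-multiplications permuted, and via (\ref{com}) and (\ref{Tortken-minus}) its associativity defects of degree $4$ be moved into terms that are visibly in $\{J_{n+1},J_1\}$. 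The point of (\ref{weakrcom}) is precisely that once we have a decomposable element in the leftmost position, right-multiplications commute, so the ``column'' data of a hook Young diagram of length $\ge 3$ can be collapsed to a single extra factor of degree $2$; anything that cannot be so collapsed already sits in $\{J_{n+1},J_1\}$.

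I expect the main obstacle to be the bookkeeping in the inductive step: one must carefully track, for a general Jordan monomial $\{w,x\}$ with $w\in J_{n+1}$, that when $w$ itself fails to be a single generator one can still express $\{w,x\}$ using the allowed building blocks, and this is where (\ref{Tortken-minus}) is needed to trade a genuinely degree-$(n+2)$, non-decomposable-in-the-naive-way term for the correction $\{(a,b,c),d\}$-type terms, all of which live in $\{J_n,J_2\}$ after one more application of commutativity. A clean way to organize this is to first prove the statement for $n=1$ and $n=2$ by hand (this is essentially the degree-$3$ and degree-$4$ content already implicit in Theorem \ref{Jordan identities}), and then run the induction: write an arbitrary generator of $J_{n+2}$ as $\{p,q\}$ with $\deg p+\deg q=n+2$ and both factors Jordan; if $\deg q=1$ we are done, if $\deg q=2$ we are done, and if $\deg q\ge 3$ apply the inductive hypothesis to $q\in J_{\deg q}$ to write $q=\sum\{q',q''\}$ with $q''$ of degree $1$ or $2$, then expand $\{p,\{q',q''\}\}$ using the degree-$4$ identities to push $p$ and $q'$ together, reducing $\deg q$. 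Termination of this reduction, and the verification that each rewriting stays inside $\{J_{n+1},J_1\}+\{J_n,J_2\}$, is the crux.
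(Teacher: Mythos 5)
Your plan is correct and takes essentially the paper's own approach: induction on degree plus commutativity and weak right-commutativity, the only difference being that you peel a degree-$1$ or degree-$2$ factor off the right-hand argument, while the paper puts the left-hand factor into an explicit normal form and inserts the other factor next to its innermost decomposable piece. The crux you flag does close, and more easily than you anticipate: once $q'=\{q_1,q_2\}$ is decomposable, commutativity and (\ref{weakrcom}) give $\{p,\{q',q''\}\}=\{\{q',q''\},p\}=\{\{q',p\},q''\}$ in one step, so the minus-Tortken identity is not needed here at all, and the degenerate case $\deg q'=1$, $\deg q''=2$ is handled by swapping the roles of $q'$ and $q''$.
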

\begin{proof}
Clearly, $J_{n+2}\supseteq\{J_{n+1},J_1\}+\{J_{n},J_2\}.$ The converse is proved by induction on degree $n.$ It is enough to prove the statement for monomials. The base of induction when $n=1$ is evident. Suppose that our statement is true for fewer than $n>1.$  Let $f=\{f_1,f_2\}\in J_{n+2}$, where $f_1\in J_k,f_2\in J_l$ and $k+l=n+2.$ Then by hypothesis of induction and weak right-commutativity we may assume that $f_1$ and $f_2$ have the following form
\begin{center}
$f_1=\{\{\cdots\{\{\{\cdots\{\{a_1,a_2\},\{a_3,a_4\}\},\cdots\},\{a_{p-1},a_p\}\},a_{p+1}\}\cdots\},a_k\},$
\end{center}
\begin{center}
$f_2=\{\{\cdots\{\{\{\cdots\{\{b_1,b_2\},\{b_3,b_4\}\},\cdots\},\{b_{q-1},b_q\}\},b_{q+1}\}\cdots\},b_l\}$\end{center}
 where $p,q\geq2.$ 
$$\{f_1,f_2\}=$$
$$\{\{\{\cdots\{\{\{\cdots\{\{a_1,a_2\},\{a_3,a_4\}\},\cdots\},\{a_{p-1},a_p\}\},a_{p+1}\}\cdots\},a_k\},f_2\}=$$
(by weak right-commutativity)
$$\{\{\cdots\{\{\{\cdots\{\{\{a_1,a_2\},f_2\},\{a_3,a_4\}\},\cdots\},\{a_{p-1},a_p\}\},a_{p+1}\}\cdots\},a_k\}=$$
(by commutativity)
$$\{\{\cdots\{\{\{\cdots\{\{f_2,\{a_1,a_2\}\},\{a_3,a_4\}\},\cdots\},\{a_{p-1},a_p\}\},a_{p+1}\}\cdots\},a_k\}$$
$$\in \{J_{n+1},J_1\}+\{J_{n},J_2\}.$$  
\end{proof}

\begin{lemma}\label{Jordan-symmetric}
Let $f\in J_n.$ Then $f$ is symmetric.  
\end{lemma}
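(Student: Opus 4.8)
The plan is to argue by induction on $n$ using the filtration from Lemma \ref{Filtr-plus}, together with the multiplicative property of the involution $\ast$ recorded in the Proposition. For $n=1$ the base elements are fixed by $\ast$, so every element of $J_1$ is symmetric. For the inductive step, suppose every element of $J_m$ with $m<n$ is symmetric, and write an arbitrary $f\in J_n$ (with $n\ge 2$) as a sum of terms $\{g,h\}=gh+hg$ where, by Lemma \ref{Filtr-plus}, we may take $g\in J_{n-1},h\in J_1$ or $g\in J_{n-2},h\in J_2$. It then suffices to show $\{g,h\}$ is symmetric for each such product.

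The key point is that in both cases at least one of the two factors has homogeneous degree $\ge 2$, so the Proposition gives $(gh)^{\ast}=g^{\ast}h^{\ast}$ whenever both $g$ and $h$ have degree $>1$, and when $h\in J_1$ (degree one) we must instead fall back on the general rule $(gh)^{\ast}=h^{\ast}g^{\ast}$. Treating the second case first: for $g\in J_{n-2},h\in J_2$ with $n\ge 4$, both factors have degree $>1$, so
$$\{g,h\}^{\ast}=(gh+hg)^{\ast}=g^{\ast}h^{\ast}+h^{\ast}g^{\ast}=\{g^{\ast},h^{\ast}\}=\{g,h\},$$
the last equality by the inductive hypothesis (and commutativity of $\{\,,\,\}$). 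For the case $h\in J_1$, i.e. $h=x$ a generator with $x^{\ast}=x$, we get
$$\{g,x\}^{\ast}=(gx+xg)^{\ast}=x^{\ast}g^{\ast}+g^{\ast}x^{\ast}=xg^{\ast}+g^{\ast}x=\{g^{\ast},x\}=\{g,x\},$$
again using the inductive hypothesis $g^{\ast}=g$. The small remaining cases $n=2,3$ (where one factor in a product has degree $1$) are handled the same way, since only the factor of degree $1$ forces us off the $(gh)^{\ast}=g^{\ast}h^{\ast}$ rule, and that factor is a symmetric generator.

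The main obstacle is purely bookkeeping: making sure the decomposition of Lemma \ref{Filtr-plus} is applied so that the factor one multiplies by always lands in a strictly lower degree component (so the inductive hypothesis is available) and keeping straight which version of the rule for $(gh)^{\ast}$ applies — the clean multiplicative rule $(gh)^{\ast}=g^{\ast}h^{\ast}$ when both degrees exceed one, versus the order-reversing rule $(gh)^{\ast}=h^{\ast}g^{\ast}$ when a degree-one generator is involved. Since generators are $\ast$-fixed, both cases collapse to the same conclusion, so no genuine difficulty arises beyond this case analysis.
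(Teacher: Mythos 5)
Your proof is correct, and it reaches the conclusion by a somewhat different mechanism than the paper. The paper also inducts on degree via Lemma \ref{Filtr-plus}, but it verifies symmetry by explicit computation in the Young-diagram basis: it expands $\{e^{(+)},c\}$ and $\{e^{(+)},\{c,d\}\}$ and observes that the results are again sums of elements of the form $e^{(+)}$. You instead derive symmetry abstractly from the Proposition that $\ast$ is an involutive anti-automorphism, which gives $\{g,h\}^{\ast}=(gh+hg)^{\ast}=h^{\ast}g^{\ast}+g^{\ast}h^{\ast}=\{g^{\ast},h^{\ast}\}$, so the anti-commutator of symmetric elements is symmetric. Two remarks. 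First, your case analysis between the rules $(gh)^{\ast}=h^{\ast}g^{\ast}$ and $(gh)^{\ast}=g^{\ast}h^{\ast}$ is unnecessary: for the anti-commutator the general order-reversing rule alone suffices, since $\{h^{\ast},g^{\ast}\}=\{g^{\ast},h^{\ast}\}$ by commutativity of $\{\,,\,\}$; the ``bookkeeping obstacle'' you flag is a non-issue. Second, for the same reason you do not actually need Lemma \ref{Filtr-plus} here at all --- any Jordan monomial of degree $n\ge 2$ is $\{g,h\}$ with $g,h$ Jordan of strictly smaller degree, which is all the induction requires. Your argument is thus shorter and more conceptual than the paper's, at the cost of leaning on the Proposition about $\ast$ (which the paper proves by citation); the paper's computation has the side benefit of exhibiting the Jordan products explicitly in the $e^{(+)}$ basis, which it reuses in the proof of Theorem \ref{Jordan identities} to count dimensions.
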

\begin{proof}
Generators of $J_n$ are symmetric. We use induction on degree $n.$ Base of induction is $\{a,b\}=(ab)^{(+)}.$ Assume that the statement is true for elements in degree fewer than $n.$ We have
$$\{\ytableausetup
{mathmode, boxsize=1.2 em}
\begin{ytableau}
a_1 & b_1& b_2 & \dots
&  b_l \\
a_2 \\
\vdots \\
a_k\\
\end{ytableau}^{(+)},\ytableausetup
{mathmode, boxsize=1.2 em}
\begin{ytableau}
c \\
\end{ytableau}\}=\ytableausetup
{mathmode, boxsize=1.2 em}
\begin{ytableau}
a_1 & b_1& b_2 & \dots
&  b_l & c\\
a_2 \\
\vdots \\
a_k\\
\end{ytableau}^{(+)}+\ytableausetup
{mathmode, boxsize=1.2 em}
\begin{ytableau}
a_1 & b_1& b_2 & \dots
&  b_l \\
a_2 \\
\vdots \\
a_k\\
c\\
\end{ytableau}^{(+)},$$ and 
$$\{\ytableausetup
{mathmode, boxsize=1.2 em}
\begin{ytableau}
a_1 & b_1& b_2 & \dots
&  b_l \\
a_2 \\
\vdots \\
a_k\\
\end{ytableau}^{(+)},\ytableausetup
{mathmode, boxsize=1.2 em}
\begin{ytableau}
c & d \\
\end{ytableau}^{(+)}\}=2\,\ytableausetup
{mathmode, boxsize=1.2 em}
\begin{ytableau}
a_1 & b_1& b_2 & \dots
&  b_l & c\\
a_2 \\
\vdots \\
a_k\\
d\\
\end{ytableau}^{(+)}+2\,\ytableausetup
{mathmode, boxsize=1.2 em}
\begin{ytableau}
a_1 & b_1& b_2 & \dots
&  b_l &d\\
a_2 \\
\vdots \\
a_k\\
c\\
\end{ytableau}^{(+)}.$$
By these relations and by Lemma \ref{Filtr-plus}, we see that any Jordan element of degree $n$ is symmetric.\end{proof}
\begin{lemma}\label{symmetric-Jordan}
Let $f$ be a symmetric element of $Bicom(X).$ Then $f$ is a Jordan element.  
\end{lemma}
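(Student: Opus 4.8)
The plan is to prove the inclusion opposite to Lemma~\ref{Jordan-symmetric}, namely that the space of symmetric elements of degree $n$ is contained in $J_n$; combined with Lemma~\ref{Jordan-symmetric} this gives $J_n=\{f\in Bicom(X)_n:f^{\ast}=f\}$, hence the statement (and, taken over all $n$, also Theorem~\ref{Jordan element}). I would induct on $n$. Since $\mathrm{char}\,{\bf K}=0$ it suffices to treat the multilinear component on $x_1,\dots,x_n$, and there a basis element $e$ of $Bicom(X)$ is recorded by the set $C$ of the entries in its first column (the corner being $\min C$), with $C$ ranging over the nonempty proper subsets of $\{1,\dots,n\}$; by the definition of $\ast$ on Young diagrams one has $e_C^{\ast}=e_{\bar C}$, so the symmetric elements are spanned by the vectors $e_C^{(+)}=e_C+e_{\bar C}$, which we may index by the unordered two-block partitions of $\{1,\dots,n\}$. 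The cases $n\le 2$ are immediate.

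For $n\ge 3$ I would first read off from the proof of Lemma~\ref{Jordan-symmetric} the two multiplication rules
\[
\{\,g^{(+)},z\,\}=e_{C'}^{(+)}+e_{C'\cup\{z\}}^{(+)},\qquad
\{\,g^{(+)},\{c,d\}\,\}=2\,e_{C'\cup\{d\}}^{(+)}+2\,e_{C'\cup\{c\}}^{(+)},
\]
where $g$ is a basis element of degree $n-1$ (resp.\ $n-2$) with first-column set $C'$ and $z$ (resp.\ $c,d$) are the variables missing from $g$; the first rule is ``append $z$ to the row, resp.\ to the column of $g$'' and the second is the degree-two multiplication formula used there. By Lemma~\ref{Filtr-plus} and the inductive hypothesis (so that $J_{n-1}$, and when $n\ge 4$ also $J_{n-2}$, coincide with the respective spaces of symmetric elements), $J_n$ is precisely the linear span of the right-hand sides above; thus it suffices to put every $e_C^{(+)}$ into that span.

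The core step runs as follows. Let $\Gamma$ be the graph on the two-block partitions of $\{1,\dots,n\}$ with an edge between two partitions that differ by moving one element from one block into the other (both blocks remaining nonempty). The first rule says exactly that $e_u^{(+)}+e_v^{(+)}\in J_n$ for every edge $\{u,v\}$ of $\Gamma$, since the degree-$(n-1)$ basis element producing that relation has two nonempty blocks and hence is Jordan by induction. The graph $\Gamma$ is connected (move the elements of one block, all but a chosen one, into the other block one at a time), so it is enough to find a single $C_0$ with $e_{C_0}^{(+)}\in J_n$: walking a path in $\Gamma$ from $C_0$ to any $C$ and using the edge relations yields $e_C^{(+)}=\pm e_{C_0}^{(+)}+(\text{a combination of elements of }J_n)$, whence $e_C^{(+)}\in J_n$ and, together with Lemma~\ref{Jordan-symmetric}, $J_n$ equals the space of symmetric elements of degree $n$. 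To obtain a seed when $n\ge 4$, fix disjoint nonempty $A,B$ and two variables $c,d$ with $A\cup B\cup\{c,d\}=\{1,\dots,n\}$ and apply the second rule with $C'=A$: this gives $e^{(+)}_{\{A\cup c,\,B\cup d\}}+e^{(+)}_{\{A\cup d,\,B\cup c\}}\in J_n$, while applying the first rule twice along the $\Gamma$-path $\{A\cup c,\,B\cup d\}-\{A,\,B\cup\{c,d\}\}-\{A\cup d,\,B\cup c\}$ gives $e^{(+)}_{\{A\cup c,\,B\cup d\}}-e^{(+)}_{\{A\cup d,\,B\cup c\}}\in J_n$; adding and halving (as $\mathrm{char}\,{\bf K}=0$) produces $e^{(+)}_{\{A\cup c,\,B\cup d\}}\in J_n$. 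For $n=3$, instead, $\Gamma$ is a triangle and one gets $2\,e_{C_0}^{(+)}\in J_3$ by traversing it with the first rule alone.

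The point that will require the most care is the bookkeeping in the two rules — in particular verifying, for every edge relation used, that the lower-degree basis element $g$ genuinely has two nonempty blocks so that the inductive hypothesis applies, and handling the degenerate low-degree cases (for instance, for $n=3$ the second rule has a degree-one factor, which is why the triangle of $\Gamma$ is exploited there instead). The remaining ingredients — connectedness of $\Gamma$ and the fact that $J_n$ is closed under linear combinations — are routine.
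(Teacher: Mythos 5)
Your proof is correct and follows essentially the same route as the paper's: reduce to showing each $e^{(+)}$ is Jordan, induct on degree, and invert the two product formulas $\{g^{(+)},z\}$ and $\{g^{(+)},\{c,d\}\}$. Your graph on two-block partitions is only a repackaging of the paper's explicit recursions --- the edge relations are its column-shrinking identity for hook diagrams, and your seed computation (two edges combined with the degree-two rule) is exactly its three-term formula for the single-row element --- so the two arguments coincide up to bookkeeping.
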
 \begin{proof}
Since any symmetric element of $Bicom(X)$ is a linear combination of elements of the form $e^{(+)}_i,$ where $e_i$ is a base element of $Bicom(X),$ it is sufficient to show that $e^{(+)}_i$ is Jordan. We prove it by induction on degree $n.$ The base of induction is given by 
$$((ab)c)^{+}=\frac{1}{2}\{\{a,b\},c\}+\frac{1}{2}\{\{a,c\},b\}-\frac{1}{2}\{\{b,c\},a\}.$$  
Assume that the statement is true for elements in degree fewer than $n.$ We have
$$\ytableausetup
{mathmode, boxsize=1.85 em}
\begin{ytableau}
a_1 & a_2& \dots & a_n
\end{ytableau}^{(+)}=
\frac{1}{2}(\{\ytableausetup
{mathmode, boxsize=1.85 em}
\begin{ytableau}
a_1 & a_2& \dots & a_{n-1}
\end{ytableau}^{(+)},\ytableausetup
{mathmode, boxsize=1.85 em}
\begin{ytableau}
a_n \\
\end{ytableau}\}+$$$$\{\ytableausetup
{mathmode, boxsize=1.85 em}
\begin{ytableau}
a_1 & a_2 & \ldots & a_{n-2} & a_n 
\end{ytableau}^{(+)},\ytableausetup
{mathmode, boxsize=1.85 em}
\begin{ytableau}
a_{n-1} \\
\end{ytableau}\}-\frac{1}{2}\{\ytableausetup
{mathmode, boxsize=1.85 em}
\begin{ytableau}
a_1 & a_2 & \dots & a_{n-2}
\end{ytableau}^{(+)},\ytableausetup
{mathmode, boxsize=1.85 em}
\begin{ytableau}
a_{n-1} & a_n \\
\end{ytableau}^{(+)}\})$$
 and 
$$\ytableausetup
{mathmode, boxsize=1.85 em}
\begin{ytableau}
a_1 & a_2& \dots
&  a_k \\
a_{k+1} \\
\vdots \\
a_n\\
\end{ytableau}^{(+)}=
\{\ytableausetup
{mathmode, boxsize=1.85 em}
\begin{ytableau}
a_1 & a_2& \dots
&  a_k \\
a_{k+1} \\
\vdots \\
a_{n-1}\\
\end{ytableau}^{(+)},\ytableausetup
{mathmode, boxsize=1.85 em}
\begin{ytableau}
a_n \\
\end{ytableau}\}-\ytableausetup
{mathmode, boxsize=1.85 em}
\begin{ytableau}
a_1 & a_2& \dots
&  a_k &a_n \\
a_{k+1} \\
\vdots \\
a_{n-1}\\
\end{ytableau}^{(+)}.$$ 
Therefore, any symmetric element is Jordan. 
\end{proof}

\begin{lemma}\label{irr-iden-plus} Every identity of degree no more than {\rm 4} satisfied by the anti-commutator products in every bicommutative algebra  is a consequence of the identities (\ref{com}), (\ref{Tortken-minus}) and (\ref{weakrcom}). 
\end{lemma}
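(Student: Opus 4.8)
The plan is to verify Lemma~\ref{irr-iden-plus} by a direct dimension count in each multilinear degree up to $4$, comparing the dimension of the space of multilinear anti-commutator consequences of (\ref{com}), (\ref{Tortken-minus}), (\ref{weakrcom}) with the dimension of the multilinear part of $J_n \subseteq Bicom(X)$. Since we work in characteristic $0$, every identity is equivalent to its multilinearizations, so it suffices to treat degrees $n = 1, 2, 3, 4$. For $n = 1, 2$ there is nothing to prove: the only multilinear anti-commutator monomial in degree $1$ is $a$, and in degree $2$ is $\{a,b\} = \{b,a\}$, so commutativity is the only relation and both sides match trivially.

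For $n = 3$, first I would list the multilinear anti-commutator monomials modulo commutativity (\ref{com}): up to reordering inside each bracket there are the products $\{\{a,b\},c\}$ for the three choices of which variable is outermost, giving a spanning set of size $3$. Then, using the explicit basis of $Bicom(X)$ recalled above (Young diagrams of shape $(3)$ and $(2,1)$) together with the formula $((ab)c)^{(+)} = \tfrac12\{\{a,b\},c\} + \tfrac12\{\{a,c\},b\} - \tfrac12\{\{b,c\},a\}$ from the proof of Lemma~\ref{symmetric-Jordan}, I would compute the image of these three monomials in $Bicom(X)$ and check that they are linearly independent there. This forces $\dim (J_3)_{\mathrm{mult}} = 3$, so there is no nontrivial degree-$3$ identity beyond commutativity, and in particular no degree-$3$ consequence of the three listed identities is needed.

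The substantive case is $n = 4$. I would enumerate the multilinear anti-commutator monomials modulo commutativity only: there are the "left-combed" products $\{\{\{a,b\},c\},d\}$ and the products of two brackets $\{\{a,b\},\{c,d\}\}$; counting the distinct ones modulo (\ref{com}) gives the size of the full (unreduced) spanning set of degree-$4$ multilinear Jordan monomials, call it $N$. Next, I would use (\ref{Tortken-minus}) and (\ref{weakrcom}) to reduce this spanning set to a smaller generating set, tracking exactly how many independent linear relations these two identities (together with all their variable permutations) impose; call the reduced count $M$. Finally, using the $Bicom(X)$ basis and the involution map $\ast$, I would compute $\dim (J_4)_{\mathrm{mult}}$ directly — by Theorems~\ref{Jordan element} (equivalently Lemmas~\ref{Jordan-symmetric} and~\ref{symmetric-Jordan}) this equals the dimension of the space of symmetric multilinear elements of $Bicom(X)$ in degree $4$, which is readily computed from the $\ast$-action on the Young-diagram basis. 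The lemma follows once I check $M = \dim (J_4)_{\mathrm{mult}}$: since the consequences of (\ref{com}), (\ref{Tortken-minus}), (\ref{weakrcom}) already span a space of dimension $\le M$ that surjects onto the $M$-dimensional (at least) space $J_4$, equality of dimensions forces that every degree-$4$ anti-commutator identity is a consequence of these three.

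The main obstacle I expect is the bookkeeping in degree $4$: correctly enumerating the multilinear Jordan monomials modulo commutativity, and especially counting precisely the rank of the relation system generated by all permuted instances of the minus-Tortken identity (\ref{Tortken-minus}) — which has $24$ permutations but many coincidences and dependencies due to the symmetries $a \leftrightarrow c$, $b \leftrightarrow d$ visible in its shape — together with the permuted instances of (\ref{weakrcom}). Getting $M$ exactly right, rather than merely an upper bound, is what makes the dimension comparison conclusive; I would double-check it by an independent computation of $\dim (J_4)_{\mathrm{mult}}$ via the symmetric part of $Bicom(X)$ so that the two numbers can be matched.
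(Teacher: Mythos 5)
Your proposal is correct and follows essentially the same route as the paper: reduce the degree-$4$ multilinear spanning set modulo (\ref{com}), (\ref{Tortken-minus}) and (\ref{weakrcom}) to seven elements (the paper's rewriting relations (\ref{rewriting1})--(\ref{rewriting2}) do exactly this reduction, yielding $M=7$) and then match against a dimension computed inside $Bicom(X)$. The only minor difference is in how the lower bound is certified: the paper directly checks that the seven reduced monomials have linearly independent images in $Bicom(\{x,y,z,t\})$ via a rank-$7$ computation, whereas you obtain the same number $7=2^{4-1}-1$ as the dimension of the symmetric multilinear subspace using Lemmas \ref{Jordan-symmetric} and \ref{symmetric-Jordan}, which is equally valid and not circular since neither lemma depends on this one.
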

\begin{proof}
A straightforward calculation shows that there is no identity in degree 3 and it can be easily checked that the identities (\ref{com}), (\ref{Tortken-minus}) and (\ref{weakrcom}) hold in every bicommutative algebra. We show that any other identity in degree 4 follows from  (\ref{com}), (\ref{Tortken-minus}) and (\ref{weakrcom}).
By the identities (\ref{com}), (\ref{Tortken-minus}) and (\ref{weakrcom}), we can have
\begin{equation}\label{rewriting1}
\{\{a,c\},\{b,d\}\}=\{\{a,b\},\{c,d\}\}-\{\{\{a,b\},c\},d\}-
\end{equation}
$$\{\{\{c,d\},a\},b\}+\{\{\{a,c\},b\},d\}+\{\{\{b,d\},a\},c\},$$
\begin{equation}\label{rewriting2}
\{\{a,d\},\{b,c\}\}=\{\{a,b\},\{c,d\}\}-\{\{\{a,b\},c\},d\}-
\end{equation}
$$\{\{\{c,d\},a\},b\}+\{\{\{a,d\},b\},c\}+\{\{\{b,c\},a\},d\}.$$ Therefore, we can claim that the multilinear subspace of $J(\{x,y,z,t\})$ is spanned by the following elements $$\{\{\{x,y\},z\},t\},\{\{\{x,z\},y\},t\},\{\{\{x,t\},y\},z\},\{\{\{y,z\},x\},t\},$$$$\{\{\{y,t\},x\},z\},\{\{\{z,t\},x\},y\},\{\{x,y\},\{z,t\}\}.$$ Hence a new identity in degree 4 has the form 
$$f(x,y,z,t)=\lambda_1\{\{\{x,y\},z\},t\}+\lambda_2\{\{\{x,z\},y\},t\}+\lambda_3\{\{\{x,t\},y\},z\}+$$$$\lambda_4\{\{\{y,z\},x\},t\}+\lambda_5\{\{\{y,t\},x\},z\}+\lambda_6\{\{\{z,t\},x\},y\}+\lambda_7\{\{x,y\},\{z,t\}\}$$ for some $\lambda_1,\ldots,\lambda_7\in{\bf K}.$
We have
$$f(x,y,z,t)=(\lambda_1+\lambda_2+\lambda_3)(((xy)z)t)^{(+)}+(\lambda_2+\lambda_3+\lambda_4+\lambda_5)(y((xz)t))^{(+)}+$$$$(\lambda_1+\lambda_3+\lambda_4+\lambda_6+2\lambda_7)(z((xy)t))^{(+)}+(\lambda_1+\lambda_2+\lambda_5+\lambda_6+2\lambda_7)(t((xy)z))^{(+)}+$$$$(\lambda_3+\lambda_5+\lambda_6)(z(y(xt)))^{(+)}+(\lambda_2+\lambda_4+\lambda_6)(t(y(xz)))^{(+)}+(\lambda_1+\lambda_4+\lambda_5)(t(z(xy)))^{(+)}.$$
Since the elements $$(((xy)z)t)^{(+)}, (y((xz)t))^{(+)}, (z((xy)t))^{(+)}, (t((xy)z))^{(+)},(z(y(xt)))^{(+)},$$$$(t(y(xz)))^{(+)},(t(z(xy)))^{(+)}$$ are linearly independent in $Bicom(\{x,y,z,t\}),$ the condition $f(x,y,z,t)=0$ gives us a system of 7 linear equations with 7 unknowns. We see that the system has rank 7, then $f(x,y,z,t)=0$ if and only if $\lambda_1=\lambda_2=\lambda_3=\lambda_4=\lambda_5=\lambda_6=\lambda_7=0.$  
\end{proof}
\begin{lemma}\label{basis-plus}
$J_n$ is spanned by elements of the form
\begin{equation}\label{basis}
\{\{\cdots\{\{\{\cdots\{\{x_{i_1},x_{i_2}\},\{x_{i_3},x_{i_4}\}\},\cdots\},\{x_{i_{2k-1}},x_{i_{2k}}\}\},x_{i_{2k+1}}\}\cdots\},x_{i_n}\}
\end{equation} where $i_1\leq\ldots\leq i_{2k}$ and $i_{2k+1}\leq\ldots\leq i_n.$ 
In particular, the number of elements of the form {\rm(\ref{basis})} in the multilinear case is equal to $2^{n-1}-1.$    
\end{lemma}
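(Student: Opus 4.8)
The plan is to establish the spanning statement in two stages, combining the filtration from Lemma \ref{Filtr-plus} with the symmetrization already used in Lemma \ref{Jordan-symmetric}, and then to count. First I would argue by induction on $n$ that every element of $J_n$ is a linear combination of left-normed products of the shape
\[
\{\{\cdots\{w,x_{i_{2k+1}}\},\cdots\},x_{i_n}\},
\]
where $w$ is a left-normed product of brackets of pairs $\{x_{i_1},x_{i_2}\},\{x_{i_3},x_{i_4}\},\ldots,\{x_{i_{2k-1}},x_{i_{2k}}\}$. This is exactly the normal form that appeared for $f_1,f_2$ in the proof of Lemma \ref{Filtr-plus}: by that lemma $J_{n}= \{J_{n-1},J_1\}+\{J_{n-2},J_2\}$, so an inductive application lets us write any Jordan element as a sum of products in which a "quadratic core" $w$ is repeatedly right-multiplied by single generators and by at most... — in fact, using Lemma \ref{Filtr-plus} repeatedly, the only two-element factors $\{x,y\}$ that survive can all be pushed into the core $w$ by weak right-commutativity (\ref{weakrcom}) together with commutativity (\ref{com}), precisely as in the displayed computation of $\{f_1,f_2\}$ in Lemma \ref{Filtr-plus}. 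So after the first stage every element of $J_n$ is a combination of monomials of the form (\ref{basis}) but without the ordering conditions on the indices.

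Second, I would impose the ordering $i_1\le\cdots\le i_{2k}$ and $i_{2k+1}\le\cdots\le i_n$. The single generators $x_{i_{2k+1}},\ldots,x_{i_n}$ that are attached on the right can be reordered among themselves by weak right-commutativity (\ref{weakrcom}), which says exactly that two consecutive right multiplications by single elements commute once the leftmost factor is decomposable; since $k\ge 1$ here the leftmost factor $w$ is indeed decomposable, so any permutation of $x_{i_{2k+1}},\ldots,x_{i_n}$ is allowed, and we may sort them. For the quadratic core $w$: the two entries inside each individual pair $\{x_{i_{2j-1}},x_{i_{2j}}\}$ can be swapped by commutativity (\ref{com}); the pairs themselves can be permuted using the minus-Tortken identity (\ref{Tortken-minus}) — more precisely, its consequences (\ref{rewriting1}) and (\ref{rewriting2}), which rewrite $\{\{a,c\},\{b,d\}\}$ and $\{\{a,d\},\{b,c\}\}$ in terms of $\{\{a,b\},\{c,d\}\}$ plus purely left-normed (lower "pair-degree") terms that get absorbed into already-handled cases by induction. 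Iterating these moves sorts the $2k$ indices of the core into nondecreasing order, modulo terms with fewer pairs, which are treated by the induction hypothesis on $k$. This proves that the elements (\ref{basis}) with the stated inequalities span $J_n$.

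Finally, the count in the multilinear case: a multilinear monomial of the form (\ref{basis}) is determined by choosing which of the $n$ indices $\{1,\ldots,n\}$ form the right "tail" $i_{2k+1},\ldots,i_n$ (the rest forming the core), since within each part the ordering is forced and the pairing of the core into consecutive pairs is then also forced by the nondecreasing order. The tail must have a complement of even, positive size $2k\ge 2$, i.e. the core is a nonempty even subset; equivalently the tail is any subset whose complement has even cardinality $\ge 2$. Counting subsets $S\subseteq\{1,\ldots,n\}$ with $|S^c|$ even and $\ge 2$ gives $2^{n-1}-1$ (half of all $2^n$ subsets have complement of even size, namely $2^{n-1}$, and we discard the single case $S=\{1,\ldots,n\}$ where the core is empty).

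The step I expect to be the main obstacle is the reordering of the quadratic core: one must check carefully that the minus-Tortken identity really does let one transpose two adjacent pairs at the cost only of terms with strictly fewer pairs (so that the induction on $k$ closes), and that these lower-order correction terms, once rewritten, themselves land among monomials of the form (\ref{basis}). Handling the boundary cases $k=1$ (no pair-reordering needed) and small $n$ cleanly, and making sure the "push everything into the core" reduction in the first stage is uniform, are the places where care is needed; everything else is the bookkeeping already rehearsed in Lemmas \ref{Filtr-plus}, \ref{Jordan-symmetric} and \ref{irr-iden-plus}.
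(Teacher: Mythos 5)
Your proposal is correct and follows essentially the same route as the paper: Lemma \ref{Filtr-plus} together with weak right-commutativity yields the bracketing type of (\ref{basis}), weak right-commutativity sorts the tail, the rewriting consequences (\ref{rewriting1})--(\ref{rewriting2}) of minus-Tortken sort the core modulo terms with fewer pairs, and the count $\sum_{k\ge 1}\binom{n}{2k}=2^{n-1}-1$ is the same. The "main obstacle" you flag (that pair transpositions only produce lower pair-degree corrections) is exactly what the rewriting relations provide, and the paper treats it with the same brevity.
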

\begin{proof}
By Lemma \ref{Filtr-plus} and the identity (\ref{weakrcom}) we obtain the type of bracketing in (\ref{basis}) for the spanning elements. The identity (\ref{weakrcom}) permits one to order all generators occurring at positions $2k+1,\ldots,n$ in (\ref{basis}). Furthermore, by the relations (\ref{rewriting1}) and (\ref{rewriting2}) we can put the generatorts at positions 1,2,3 and 4 in (\ref{basis}) in non-decreasing order. Then by weak right-commutativity and relations (\ref{rewriting1}),(\ref{rewriting2}), one can order the generators at positions from 1 to $2k$ in (\ref{basis}). 

The number of choosing different variables on the first $2k$ position in (\ref{basis}) is equal to ${n\choose 2k}$ where $k\in\{1,\ldots,\lfloor\frac{n}{2}\rfloor\}.$ Then the number of the spanning elements is equal to $\sum_{k=1}{{n\choose 2k}}=2^{n-1}-1.$
\end{proof}

{\it Proof of Theorem \ref{Jordan identities}}
Lemma \ref{basis-plus} gives the upper bound dimension for the space of multilinear elements of $J_n$ and it is equal to $2^{n-1}-1.$ On the other hand, from Corollary 2.4 in \cite{Dzh-Ism-Tul} follows that if $|X|=n,$ then the multilinear dimension of $Bicom(X)$ is equal to $2^n-2.$ Lemma \ref{Jordan-symmetric} shows that any Jordan element is a linear combination of elements $e^{(+)}$ in $Bicom(X)$ where $e$ is the basis element of space $Bicom(X)$, so the multilinear dimension of subspace of symmetric elements in $Bicom(X)$ is equal to $2^{n-1}-1.$ This gives a lower bound dimension for $J_n.$ Therefore, every identity satisfied by the commutators follows from commutativite, the minus-Tortken and weak right-commutative identities. $\square$

 \section{\label{nn}\ Proof of Theorem \ref{Lie identities}.}
\begin{lemma}\label{Lie expansion}
If $n$ is odd, then
$$[[\cdots[[a_1,a_2],a_3]\cdots],a_n]=$$
$$\ytableausetup
{mathmode, boxsize=1.2 em}
\begin{ytableau}
a_1 & a_2& \dots & a_n
\end{ytableau}^{(+)}-\sum_{i=3}^n\ytableausetup
{mathmode, boxsize=1.2 em}
\begin{ytableau}
a_1 & a_2& \dots & a_n\\
a_i
\end{ytableau}^{(+)}+\ldots+\sum_{i=3}^n\ytableausetup
{mathmode, boxsize=1.2 em}
\begin{ytableau}
a_1 & a_2 & a_i\\
a_3\\
\vdots \\
a_n
\end{ytableau}^{(+)}-\ytableausetup
{mathmode, boxsize=1.2 em}
\begin{ytableau}
a_1 & a_2\\
a_3\\
\vdots \\
a_n\end{ytableau}^{(+)}.$$
If $n$ is even, then
$$[[\cdots[[a_1,a_2],a_3]\cdots],a_n]=$$
$$\ytableausetup
{mathmode, boxsize=1.2 em}
\begin{ytableau}
a_1 & a_2& \dots & a_n
\end{ytableau} ^{(-)}-\sum_{i=3}^n\ytableausetup
{mathmode, boxsize=1.2 em}
\begin{ytableau}
a_1 & a_2& \dots & a_n\\
a_i
\end{ytableau} ^{(-)}+\ldots-\sum_{i=3}^n\ytableausetup
{mathmode, boxsize=1.2 em}
\begin{ytableau}
a_1 & a_2 & a_i\\
a_3\\
\vdots \\
a_n
\end{ytableau} ^{(-)}+\ytableausetup
{mathmode, boxsize=1.2 em}
\begin{ytableau}
a_1 & a_2\\
a_3\\
\vdots \\
a_n\end{ytableau} ^{(-)}.$$
\end{lemma}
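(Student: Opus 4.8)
The plan is to prove both displayed formulas at once, by induction on $n$, using $W_n:=[[\cdots[a_1,a_2]\cdots],a_n]=[W_{n-1},a_n]=W_{n-1}a_n-a_nW_{n-1}$. I will use two facts about the basis. From the definition of $\ast$ one reads off that for a base element $e$ with first column (as a multiset) $\mathrm{col}(e)$ and first-row tail $\mathrm{row}(e)$ (the entries of the first row other than the corner), $e^{\ast}$ is the base element with $\mathrm{col}(e^{\ast})=\mathrm{row}(e)$ and $\mathrm{row}(e^{\ast})=\mathrm{col}(e)$; hence $e^{(\pm)}=e\pm e^{\ast}$ span the symmetric, resp.\ skew, line attached to $e$, and $(e^{(\pm)})^{\ast}=\pm e^{(\pm)}$. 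Secondly, I use the right-multiplication rule in $Bicom(X)$: for a base element $e$ of degree $\geq 2$ and $a\in X$, the monomial $ea$ is again a base element, with $\mathrm{col}(ea)=\mathrm{col}(e)$ and $\mathrm{row}(ea)=\mathrm{row}(e)\cup\{a\}$ (insert $a$ into the first row and re-sort); this follows from right-commutativity together with the uniqueness of the normal form for bicommutative monomials in \cite{Dzh-Ism-Tul}. Combining the two, $e^{\ast}a$ is the base element with $\mathrm{col}(e^{\ast}a)=\mathrm{row}(e)\cup\{a\}$ and $\mathrm{row}(e^{\ast}a)=\mathrm{col}(e)$.

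First I record the parity $W_n^{\ast}=(-1)^{n-1}W_n$. Indeed $W_2^{\ast}=(a_1a_2-a_2a_1)^{\ast}=a_2a_1-a_1a_2=-W_2$, and since $\ast$ is an involution with $(uv)^{\ast}=v^{\ast}u^{\ast}$ and $a_n^{\ast}=a_n$,
$$W_n^{\ast}=(W_{n-1}a_n)^{\ast}-(a_nW_{n-1})^{\ast}=a_nW_{n-1}^{\ast}-W_{n-1}^{\ast}a_n=(-1)^{n-2}(a_nW_{n-1}-W_{n-1}a_n)=(-1)^{n-1}W_n.$$
In particular $(W_{n-1}a_n)^{\ast}=a_nW_{n-1}^{\ast}=(-1)^{n-2}a_nW_{n-1}$, so
$$W_n=W_{n-1}a_n-a_nW_{n-1}=W_{n-1}a_n+(-1)^{n-1}(W_{n-1}a_n)^{\ast}=(W_{n-1}a_n)^{(\pm)},$$
where the sign is $(-1)^{n-1}$. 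This identity is the source of the alternation between the $e^{(+)}$-formula (odd $n$) and the $e^{(-)}$-formula (even $n$), and reduces the problem to expanding $W_{n-1}a_n$ in the basis.

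For the induction, write the claimed answer uniformly as $W_n=\sum_{S\subseteq\{3,\dots,n\}}(-1)^{|S|}e_S^{(\varepsilon_n)}$, with $\varepsilon_n=(-1)^{n-1}$, where $e_S$ is the base element with $\mathrm{col}(e_S)=\{a_1\}\cup\{a_j:j\in S\}$ and $\mathrm{row}(e_S)=\{a_2\}\cup\{a_j:j\in\{3,\dots,n\}\setminus S\}$; the summand $S=\varnothing$ is the one-row tableau and $S=\{3,\dots,n\}$ the maximal hook. The base case $n=2$ is $W_2=(a_1a_2)^{(-)}$. Assuming the formula for $W_{n-1}$, apply the two rules above: $e_Sa_n=e_S$ (now viewed as a degree-$n$ tableau with $S\subseteq\{3,\dots,n\}$, $n\notin S$), while $e_S^{\ast}a_n=e_{S\cup\{n\}}^{\ast}$. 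Substituting $W_{n-1}=\sum_{S\subseteq\{3,\dots,n-1\}}(-1)^{|S|}\bigl(e_S+(-1)^{n-2}e_S^{\ast}\bigr)$ into $W_n=W_{n-1}a_n+(-1)^{n-1}(W_{n-1}a_n)^{\ast}$, using $e^{\ast\ast}=e$ and $(-1)^{n-1}(-1)^{n-2}=-1$, and collecting, the right-hand side becomes $\sum_{S}(-1)^{|S|}\bigl(e_S^{(\varepsilon_n)}-e_{S\cup\{n\}}^{(\varepsilon_n)}\bigr)=\sum_{T\subseteq\{3,\dots,n\}}(-1)^{|T|}e_T^{(\varepsilon_n)}$, the last equality splitting the sum according to whether $n\in T$. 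This is exactly the uniform formula in degree $n$. Finally, grouping these $2^{n-2}$ tableaux by $|S|$ reproduces the statement verbatim: the block $|S|=j$ carries the sign $(-1)^j$, the block $j=0$ is the one-row tableau, $j=1$ gives $\sum_{i=3}^n$ of the hooks obtained by pushing $a_i$ down one box from the first row, and so on up to $j=n-2$, the single maximal hook, with $e^{(+)}$ for $n$ odd and $e^{(-)}$ for $n$ even.

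The main obstacle is the right-multiplication rule: right-commutativity alone only lets one move the new generator $a$ partway into a hook, and to conclude that $\mathrm{col}(e)$ is genuinely unchanged while $a$ simply joins $\mathrm{row}(e)$ one really needs the uniqueness of the bicommutative normal form from \cite{Dzh-Ism-Tul}. Everything else is the sign and linear-algebra bookkeeping above, which can be sanity-checked by hand in degrees $n=2,3,4$.
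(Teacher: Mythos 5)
Your proof is correct and takes essentially the same route as the paper: induction on $n$ via $W_n=[W_{n-1},a_n]$, expanding the product of a basis hook with a single generator in the basis; the only cosmetic difference is that you recover the summand $a_nW_{n-1}$ through the involution and the parity $W_n^{\ast}=(-1)^{n-1}W_n$, whereas the paper writes down the two bracket identities for $[e^{(\pm)},c]$ directly. The right-multiplication rule you single out as the main obstacle is exactly the multiplication law from Lemma 3.3 of \cite{Dzh-Ism-Tul} that the paper also relies on, and, as you suspect, it needs both defining identities (left-commutativity is what lets the new right factor pass the column elements), not right-commutativity alone.
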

\begin{proof}
It is easy to see that   
$$[\,\ytableausetup
{mathmode, boxsize=1.2 em}
\begin{ytableau}
a_1 & b_1& b_2 & \dots
&  b_l \\
a_2 \\
\vdots \\
a_k\\
\end{ytableau}^{(+)},\ytableausetup
{mathmode, boxsize=1.2 em}
\begin{ytableau}
c \\
\end{ytableau}\,]=\ytableausetup
{mathmode, boxsize=1.2 em}
\begin{ytableau}
a_1 & b_1& b_2 & \dots
&  b_l & c\\
a_2 \\
\vdots \\
a_k\\
\end{ytableau} ^{(-)}-\ytableausetup
{mathmode, boxsize=1.2 em}
\begin{ytableau}
a_1 & b_1& b_2 & \dots
&  b_l \\
a_2 \\
\vdots \\
a_k\\
c\\
\end{ytableau} ^{(-)}$$
and
$$[\,\ytableausetup
{mathmode, boxsize=1.2 em}
\begin{ytableau}
a_1 & b_1& b_2 & \dots
&  b_l \\
a_2 \\
\vdots \\
a_k\\
\end{ytableau}^{(-)},\ytableausetup
{mathmode, boxsize=1.2 em}
\begin{ytableau}
c \\
\end{ytableau}\,]=\ytableausetup
{mathmode, boxsize=1.2 em}
\begin{ytableau}
a_1 & b_1& b_2 & \dots
&  b_l & c\\
a_2 \\
\vdots \\
a_k\\
\end{ytableau} ^{(+)}+\ytableausetup
{mathmode, boxsize=1.2 em}
\begin{ytableau}
a_1 & b_1& b_2 & \dots
&  b_l \\
a_2 \\
\vdots \\
a_k\\
c\\
\end{ytableau} ^{(+)}$$
where $k+l=n-1.$ Induction on $n$ based on these relations ends the proof.
\end{proof}
\begin{lemma}\label{irr-iden-minus}
Every identity of degree no more than {\rm 4} satisfied by the commutator products in every bicommutative algebra is a consequence of anti-commutative, the Jacobi and the metabelian identities. 
\end{lemma}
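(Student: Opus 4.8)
The plan is to mirror the strategy already used for Lemma~\ref{irr-iden-plus} in the anti-commutator case, but now working with the commutator product and the span given by Lemma~\ref{Lie expansion}. First I would observe that by anti-commutativity, the Jacobi identity, and the metabelian identity one reduces any multilinear commutator polynomial of degree $\le 4$ to a standard spanning set: in degree $2$ only $[a,b]$; in degree $3$ only the left-normed brackets $[[x,y],z]$ and $[[x,z],y]$ (Jacobi eliminates the third), which after antisymmetry are linearly independent in $Bicom(\{x,y,z\})$; and in degree $4$ the metabelian identity kills all brackets of the shape $[[a,b],[c,d]]$, so everything is a linear combination of left-normed brackets $[[[x_{\sigma(1)},x_{\sigma(2)}],x_{\sigma(3)}],x_{\sigma(4)}]$. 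Using antisymmetry in the innermost bracket and Jacobi to rotate, one cuts this down further to a spanning set of the expected size.

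Next I would pass to the free bicommutative algebra. Here Lemma~\ref{Lie expansion} is the key tool: it expresses each left-normed bracket explicitly in the Young-diagram basis of $Bicom(X)$ (the $(-)$ and $(+)$ combinations of base monomials). Substituting these expansions, a hypothetical identity $f$ of degree $\le 4$ becomes a linear combination of the base elements of $Bicom(\{x,y,z,t\})$ with coefficients that are linear in the unknown scalars. Since the base elements are, by construction in \cite{Dzh-Ism-Tul}, linearly independent, setting $f=0$ yields a homogeneous linear system in the scalars. I would then check that this system has full rank, forcing all scalars to vanish, which shows no new identity exists beyond the consequences of (\ref{anti-com}), (\ref{jacobi}), (\ref{metabelian}).

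Alternatively — and this is probably the cleaner route, paralleling the dimension-count proof of Theorem~\ref{Jordan identities} — I would compute the dimension of the multilinear part of $L_n$ for $n\le 4$ directly from the metabelian Lie algebra presentation: a free metabelian Lie algebra on $n$ generators has multilinear dimension $n-1$ in each degree $\ge 2$ (the left-normed brackets $[[x_{i_1},x_n],\dots]$ with $i_1<\dots$, modulo Jacobi, give exactly $n-1$ independent elements), and then verify via Lemma~\ref{Lie expansion} that these $n-1$ elements are genuinely linearly independent inside $Bicom(X)$ for $n=2,3,4$. Matching the lower bound (the metabelian free Lie dimension) with the upper bound (the span one gets from just antisymmetry, Jacobi and metabelian) closes the argument.

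The main obstacle I expect is the degree-$4$ linear-independence verification: one must be careful that the images under Lemma~\ref{Lie expansion} of the reduced left-normed brackets really are independent in the $(2^4-2)=14$-dimensional multilinear space $Bicom(\{x,y,z,t\})$, i.e. that the metabelian relation is the \emph{only} relation that survives and no further collapse happens through the specific combinatorics of the $\ast$-type $(+)$ and $(-)$ expansions. This is a finite but somewhat delicate rank computation; everything else (degrees $2$ and $3$, and the general reductions) is routine.
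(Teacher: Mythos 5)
Your first route is exactly the paper's proof: after noting that $\mathcal{B}icom^{(-)}$ is metabelian Lie, the paper writes any putative degree-$4$ identity as $\lambda_1[[[x,y],z],t]+\lambda_2[[[x,z],y],t]+\lambda_3[[[x,t],y],z]$, expands via Lemma~\ref{Lie expansion} in the Young-diagram basis of $Bicom(\{x,y,z,t\})$, and reads off $\lambda_1=\lambda_2=\lambda_3=0$ from linear independence of the base elements. Your proposal is correct and takes essentially the same approach (your alternative dimension count is just a repackaging of the same rank verification).
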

\begin{proof}
In \cite{Burde} and \cite{Dzh-Tul} it was shown that any bicommutative algebra under Lie product becomes a metabelian Lie. A straightforward calculation shows that there is no identity in degree 3. We show that any identity in degree 4 follows from  (\ref{anti-com}), (\ref{jacobi}) and (\ref{metabelian}). If there is a new identity in degree 4 then according to the identities (\ref{anti-com}), (\ref{jacobi}) and (\ref{metabelian}), its polynomial $f(x,y,z,t)$ has the form $$f(x,y,z,t)=\lambda_1[[[x,y],z],t]+\lambda_2[[[x,z],y],t]+\lambda_3[[[x,t],y],z],$$ for some $\lambda_1,\lambda_2,\lambda_3\in{\bf K}.$
We have
$$f(x,y,z,t)=$$$$(\lambda_1+\lambda_2+\lambda_3)(((xy)z)t)^{(-)}-(\lambda_1+\lambda_2)(t((xy)x))^{(-)}-(\lambda_1+\lambda_3)(z((xy)t))^{(-)}-$$$$(\lambda_2+\lambda_3)(y((xz)t))^{(-)}+\lambda_1(t(z(xy)))^{(-)}+\lambda_2(t(y(xz)))^{(-)}+\lambda_3(z(y(xt)))^{(-)}.$$

Since the elements $$(((xy)z)t)^{(+)}, (y((xz)t))^{(+)}, (z((xy)t))^{(+)}, (t((xy)z))^{(+)},(z(y(xt)))^{(+)},$$$$(t(y(xz)))^{(+)},(t(z(xy)))^{(+)}$$ are linearly independent in $Bicom(\{x,y,z,t\}),$ then $f(x,y,z,t)=0$ if and only if $\lambda_1=\lambda_2=\lambda_3=0.$ 
\end{proof} 
A base of a free metabelian Lie algebra is given in \cite{Bahturin}. The set of elements  $[[\cdots[[x_1,x_{i_1}]x_{i_2}],\cdots],x_{i_{n-1}}]$, where $1<i_1$ and  $2\leq i_2\leq\ldots\leq i_{n-1}\leq n$ form a base of the free metabelian Lie algebra of degree $n.$

{\it Proof of Theorem \ref{Lie identities}.}
Suppose that $f(x_1,\ldots,x_n)=0$ is a new identity in degree $n$ satisfied by the commutator product in all bicommutative algebras. We can assume that the polynomial $f(x_1,\ldots,x_n)$ has the following form 
$$f(x_1,\ldots,x_n)=\sum_{i=2}^{n}\lambda_i[[\cdots[[x_1,x_{i}]x_2],\cdots],x_n]$$
for some $\lambda_2,\ldots,\lambda_n\in{\bf K},$ where $1<i_1$ and  $2\leq i_2\leq\ldots\leq i_{n-1}\leq n,$  By Lemma \ref{Lie expansion} we have 
 $$f(x_1,\ldots,x_n)=$$$$(\sum_{i=2}^{n}\lambda_i)\ytableausetup
{mathmode, boxsize=1.8 em}
\begin{ytableau}
x_1 & x_2& \dots & x_n
\end{ytableau}^{(\pm)}\mp\lambda_2\ytableausetup
{mathmode, boxsize=2 em}
\begin{ytableau}
x_1 & x_2\\
x_3\\
\vdots \\
x_n\end{ytableau}^{(\pm)}\mp\ldots\mp\lambda_n\ytableausetup
{mathmode, boxsize=2 em}
\begin{ytableau}
x_1 & x_n\\
x_2\\
\vdots \\
x_{n-1}\end{ytableau}^{(\pm)}+g$$
where $g$ is the sum of base elements which has no form of Young diagrams $(n)$ and $(2,1^{n-2}),$ $f(x_1,\ldots,x
_n)$ takes above signs if $n$ is odd, bottom signs if $n$ is even. Then we see that $f(x_1,\ldots,x_n)=0$ if and only if $\lambda_2=\ldots=\lambda_n=0.$ $\square$
 \section{\label{nn}\ Proof of Theorem \ref{Lie elements}.}
Let $f\in Bicom(X)$ and suppose that $D(head(f))=f.$ Since the image of Dynkin map $D$ is the subspace of $L(X),$ $f$ is a Lie element. 

Now, suppose that $f\in L(X).$ Let $$f=\sum_{i=2}^{n}\lambda_i[[\cdots[[x_1,x_{i}]x_2],\cdots],x_n]$$
for some $\lambda_2,\ldots,\lambda_n\in K.$ Then by Lemma \ref{Lie expansion}
$$head(f)=\mp\lambda_2\ytableausetup
{mathmode, boxsize=1.8 em}
\begin{ytableau}
x_1 & x_2\\
x_3\\
\vdots \\
x_n\end{ytableau}^{(\pm)}\mp\ldots\mp\lambda_n\ytableausetup
{mathmode, boxsize=1.8 em}
\begin{ytableau}
x_1 & x_n\\
x_2\\
\vdots \\
x_{n-1}\end{ytableau}^{(\pm)}$$
If $n$ is odd, we note that
$$D(-\lambda_2\ytableausetup
{mathmode, boxsize=1.8 em}
\begin{ytableau}
x_1 & x_2\\
x_3\\
\vdots \\
x_n\end{ytableau}^{(+)})=-\lambda_2D(\ytableausetup
{mathmode, boxsize=1.8 em}
\begin{ytableau}
x_1 & x_2\\
x_3\\
\vdots \\
x_n\end{ytableau}+\ytableausetup
{mathmode, boxsize=1.8 em}
\begin{ytableau}
x_2 & x_1& \dots & x_n
\end{ytableau})
$$$$-\frac{\lambda_2}{2}([x_n,[\cdots[x_3,[x_1,x_2]]\cdots]]+[[\cdots[[x_2,x_1],x_3]\cdots],x_n])=$$
(by anti-commutativity) 
$$\lambda_2[[\cdots[[x_1,x_2],x_3]\cdots],x_n].$$
If $n$ is even, we note that
$$D(\lambda_2\ytableausetup
{mathmode, boxsize=1.8 em}
\begin{ytableau}
x_1 & x_2\\
x_3\\
\vdots \\
x_n\end{ytableau}^{(-)})=\lambda_2D(\ytableausetup
{mathmode, boxsize=1.8 em}
\begin{ytableau}
x_1 & x_2\\
x_3\\
\vdots \\
x_n\end{ytableau}-\ytableausetup
{mathmode, boxsize=1.8 em}
\begin{ytableau}
x_2 & x_1& \dots & x_n
\end{ytableau})
$$$$\frac{\lambda_2}{2}([x_n,[\cdots[x_3,[x_1,x_2]]\cdots]]-[[\cdots[[x_2,x_1],x_3]\cdots],x_n])=$$
(by anti-commutativity) 
$$\lambda_2[[\cdots[[x_1,x_2],x_3]\cdots],x_n].$$
Then apply Dynkin map $D$ to other basis elements in $head(f)$ and we obtain $D(head(f))=f.$ 
 \section{\label{nn}\ Proof of Corollary \ref{relation}.} 
It follows from Theorem \ref{Jordan element} and Lemma \ref{Lie expansion}.  
\section{\label{nn}\ Independence of identities in degree 4.}
In this section we show independence of the minus-Tortken and weak right-commutativity identities in a free bicommutative algebra. Let $A=\{e_1,e_2,e_3,e_4\}$ be a four dimensional algebra with the multiplication table
$$e^2_1=e_1, \quad  e_1e_2=e_2e_1=\frac{1}{2}e_2 $$
	and zero products are omitted. This algebra is a Jordan algebra \cite{Martin}. A straightforward calculation shows that the algebra $A$ satisfies the identity (\ref{Tortken-minus}). But for substition $a=e_1,b=e_1,c=e_1$ and $d=e_2$, $A$ does not satisfy the weak right-commutative identity, namely $$((e_1e_1)e_1)e_2-((e_1e_1)e_2)e_1=\frac{e_2}{4}\neq0.$$ So, the weak right-commutativity is not a consequence of the minus-Tortken identity in every commutative algebra.
	  
It is easy to see that a set of the mutlinear bases elements in degree 4 of the free commutative algebra with identity (\ref{weakrcom}) is the following $$\{((ab)c)d,((ac)b)d,((ad)b)c,((bc)a)d,((bd)a)c,((cd)a)b,$$ 
$$(ab)(cd),(ac)(bd),(ad)(bc)\}.$$ We express each summand in the identity (\ref{Tortken-minus}) in terms of these base elements and one may see that a free commutative algebra with identity (\ref{weakrcom}) does not satisfy the minus-Tortken identity. So, the minus-Tortken identity is not a consequence of weak right-commutativity in every commutative algebra. Therefore, they are independent identities in a free bicommutative algebra.

\end{document}